\DeclareMathOperator{\Ker}{Ker}
\DeclareMathOperator{\Coker}{Coker}
\newtheorem{theorem}{Theorem}[section]
\newtheorem{lemma}[theorem]{Lemma}
\newtheorem{prop}[theorem]{Proposition}
\newtheorem{prob}[theorem]{Problem}
\newtheorem{corollary}[theorem]{Corollary}
\newenvironment{remark}{\noindent \textbf{Remark}.}{\hfill $\square$}
\renewcommand{\Im}{\mathop{\rm Im}\nolimits}
\newcommand{\bb}{\mathbb}
\newcommand{\cal}{\mathcal}
\numberwithin{equation}{section}
\title{Some results on the generic vanishing of Koszul cohomology via deformation theory}
\author{Jie Wang}
\address{Department of Mathematics, University of Georgia\\
 Athens GA, 30602.}
\email{jiewang@math.uga.edu}
\subjclass[2000]{}
\keywords{Koszul Cohomology, Maximal Rank Conjecture, deformation theory, general curves}
\begin{document}
\maketitle

\begin{abstract} We study the deformation-obstruction theory of Koszul cohomology groups of $g^r_d$'s on singular nodal curves. We compute the obstruction classes for Koszul cohomology classes on singular curves to deform to a smooth one. In the case the obstructions are nontrivial, we obtain some partial results for generic vanishing of Koszul cohomology groups.
\end{abstract}

\bigskip

\section{Introduction.}

In this paper we apply deformation theory to study the syzygies of general curves in $\mathbb{P}^r$ with fixed genus and degree.
Let $L$ be a base point free $g^r_d$ on a smooth curve $X$, the Koszul cohomology group $K_{p,q}(X,L)$ is the cohomology of the Koszul complex at $(p,q)$-spot
$$\xymatrix{\ar[r]&\wedge^{p+1}H^0(L)\otimes H^0(X,L^{q-1})\ar[r]^-{d_{p+1,q-1}}&\wedge^pH^0(L)\otimes H^0(X,L^q)\ar[r]^-{d_{p,q}}&\wedge^{p-1}H^0(L)\otimes H^0(X,L^{q+1})\ar[r]&}$$
where
$$d_{p,q}(v_1\wedge...\wedge v_p\otimes \sigma)=\sum_{i}(-1)^iv_1\wedge...\wedge\widehat v_i\wedge..\wedge v_p\otimes v_i\sigma.$$ 

Koszul cohomology groups $K_{p,q}(X,L)$ completely determine the shape of a minimal free resolution of the section ring
$$R=R(X,L)=\bigoplus_{k\ge0}H^0(X,L^k).$$
and therefore carry a lot of information of the extrinsic geometry of $X$.

We are interested in Green's question

\begin{prob}\label{green}(Green)
What is the variational theory of the $K_{p,q}(X,L)$? What do they look like for $X$ a general curve and $L$ a general $g^r_d$? 
\end{prob}

If $(X,L)$ is general in $\cal{G}^r_{g,d}$ (in this paper, this means the Brill-Noether number $\rho=g-(r+1)(g-d+r)\ge0$ and $(X,L)$ is a general point of the unique component of $\cal{G}^r_{g,d}$ which dominates $\cal{M}_g$), it is well known that we only have to determine $K_{p,1}(X,L)$, or equivalently $K_{p-1,2}(X,L)$, for $1\le p\le r-1$ (c.f. Section 2).


Problem \ref{green} seems to be too difficult to answer in its full generality.
For arbitrary $g^r_d$ on a general curve $X$,  the first case to determine $K_{1,1}(X,L)$ or $K_{0,2}(X,L)$ is still open. The Maximal Rank Conjecture (MRC) \cite{EH2} predicts that the multiplication map
\begin{eqnarray}\label{multimap}Sym^2H^0(X,L)\stackrel{\mu}\longrightarrow H^0(X,L^2)
\end{eqnarray}
 is either injective or surjective i.e. 
\begin{eqnarray}\label{MRC}\min\{k_{1,1}(X,L),k_{0,2}(X,L)\}=0.
\end{eqnarray}

 

Geometrically, this means that the number of quadrics in $\mathbb{P}^r$ containing $X$ is as simple as the Hilbert function of $X\subset\bb{P}^r$ allows.

 There are many partial results about (\ref{MRC}) using the so-called \textquotedblleft m$\acute{e}$thode d'Horace"
originally proposed by Hirschowitz. It amounts to a degeneration argument to a carefully chosen 
singular curve in projective space and proving the statement on such a curve by a delicate
inductive argument. We refer to, for instance, \cite{BF1}, \cite{BF2} for some recent results in this direction.

For higher syzygies, again there are many results (c.f. \cite{A1}, \cite{B}, \cite{Ein}, and \cite{F}). One breakthrough result is Voisin's solution to the generic Green's conjecture \cite{V1} \cite{V2}, which solves Problem \ref{green} for the case $L=K_X$.



For the vanishing of $K_{p,1}$, there is the work of Aprodu \cite{A1} \cite{A2}, which proved the generic version of the Green-Lazarsfeld Gonanity Conjecture. This conjecture predicts that for smooth curve $X$ of gonanity $d$, and $L$ a sufficiently positive line bundle on $X$,
$$K_{h^0(L)-d,1}(X,L)=0.$$
Note that Problem \ref{green} does not have any assumption on the positivity of $L$.

It seems that the method of all of the above results amount to degenerating to special curves, often a carefully chosen singular one, and verifying the statements on these special curves. Given the fact that sometimes such special curves are difficult to find, and the inductive arguments could get technical, we would like to take a slightly different point of view. We will consider one parameter degeneration to the simplest possible singular curves, namely union of two smooth curves meeting at a node. Of course, there is no hope to directly verify the vanishing statements we would like to prove on these curves (c.f. section 3), but we are able to compute the obstructions for the 'extra' Koszul classes of the singular fiber to deform to nearby fibers. If one could prove these 'extra' Koszul classes are obstructed, we conclude the general fiber has the vanishing property we need.  We feel this point of view has a good chance to generalize.

More precisely,  suppose property ${\bf{GV}}(p)^r_{g,d}$ holds, i.e.  for general $L'=g^r_d$ on general curve $C$ of genus $g$ we have
\begin{eqnarray} \label{gv}\min\{ k_{p,1}(C,L'), k_{p-1,2}(C,L')\}=0.
\end{eqnarray}
We ask the following question
\begin{prob}\label{question}In what situation does ${\bf{GV}}(p)^r_{g+1,d+1}$ hold? \end{prob}

If the answer to this question is Yes, then one could set up an inductive argument. Each step $r$ is fixed and $g$, $d$ go up by $1$, or equivalently, $r$ and $h^1$ fixed, $g$ goes up by $1$.

In the case $p=1$, the Maximal Rank Conjecture predicts the answer should always be affirmative. For higher syzygies, it is not always the case, but one would like to prove some generic vanishing results for some special $\{g,r,d\}$.

In this paper, we give a simple condition to guarantee ${\bf GV}(p)^r_{g,d}$ implies ${\bf GV}(p)^r_{g+1,d+1}$ from a deformation-theoretic point of view.  We study the deformation theory of Koszul cohomology groups on the simplest kind of singular curve $X_0$: a union of a general curve $C$ of genus $g$ and an elliptic curve $E$ meeting at a node $u$. $L_0$ is carefully chosen (c.f section 3) such that 
\begin{enumerate}
\item $(X_0,L_0)$ is smoothable to $L_t=g^r_{d+1}$ on a smooth curve $X_t$ of genus $g+1$,
\item $L_0|_C=L'$ and therefore $\min\{ k_{p,1}(C,L_0|_C), k_{p-1,2}(C,L_0|_C)\}=0$.
\item $L_0|_E=\cal{O}_E(v)$ for another general point $v\in E$. 

\end{enumerate}

We prove that
\begin{theorem}\label{generalcase}Let $C\subset\bb{P}^r$ be a general curve, $|L'|$ a general $g^r_d$ on $C$ and $M_{L'}$ be the kernel bundle defined by the sequence
$$\xymatrix{0\ar[r]&M_{L'}\ar[r]&H^0(L')\otimes\cal{O}_C\ar[r]^-{ev}&L'\ar[r]&0}.$$
then the following holds
\begin{enumerate}
\item If $K_{p,1}(C,L')=0$ then $K_{p,1}(X_t,L_t)=0$.
\item If $K_{p-1,2}(C,L')=0$ and 
\begin{eqnarray}\label{h0condition}h^0(C,\wedge^{r-p}M_{L'}\otimes K_C)=h^0(C,\wedge^{r-p}M_{L'}\otimes K_C(2u))
\end{eqnarray}
for a general point $u\in C$, then $K_{p-1,2}(X_t,L_t)=0$.

\end{enumerate}
In other words, ${\bf GV}(p)^r_{g,d}$ always implies ${\bf GV}(p)^r_{g+1,d+1}$ if (\ref{h0condition}) holds.

\end{theorem}

  The upshot is that under such degeneration, we could explicitly compute generators of $K_{p,q}(X_0,L_0)$. Unfortunately $(X_0,L_0)$ does not satisfy (\ref{gv}). However, we could compute the obstructions for the \textquotedblleft extra"  Koszul classes to deform to $K_{p,q}(X_t,L_t)$. If every \textquotedblleft extra" Koszul class is obstructed, we conclude that (\ref{gv}) holds for $(X_t,L_t)$. Condition (\ref{h0condition}) is a sufficient condition for the \textquotedblleft extra"  Koszul classes to be obstructed.

In the case $p=1$ (Maximal Rank Conjecture) this sufficient condition turns out to be very geometric:
\begin{theorem} \label{mainresult}Let $C\subset\bb{P}^r$ be a general curve embedded by a general $g^r_d$ $|L'|$, and suppose one of the following two conditions holds
\begin{enumerate}
\item $\mu$ in (\ref{multimap}) is injective, or
\item $\mu$ is surjective and there exists a quadric $Q\in \Ker(\mu)$ containing $C$ but not containing the tangential variety $TC:=\cup_{u\in C}T_uC$,
\end{enumerate}
 then $(MRC)^r_{g+1,d+1}$ holds as well.

\end{theorem}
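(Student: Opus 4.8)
The plan is to prove $(MRC)^r_{g+1,d+1}$ from the genus $g$ data by a degeneration together with semicontinuity of rank. First I would construct a flat one-parameter family $\pi\colon(\mathcal X,\mathcal L)\to B$ over a pointed smooth curve $(B,0)$ whose general fibre $(\mathcal X_b,\mathcal L_b)$ is a general pair in $\mathcal G^r_{g+1,d+1}$ and whose special fibre $\mathcal X_0$ is a cuspidal curve of arithmetic genus $g+1$ with normalization $\nu\colon X\to\mathcal X_0$, the cusp sitting at a general point $x\in X$ with a general tangent direction, and with $\mathcal L_0:=\mathcal L|_{\mathcal X_0}$ of degree $d+1$ chosen so that $\nu^*\mathcal L_0$ and the induced $(r+1)$-dimensional linear series recover the pair $(X,L)$ up to the tangent data at $x$ (concretely $L=\nu^*\mathcal L_0(-x)$, the cusp forcing a base point at $x$). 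The first thing to check is that $(\mathcal X_0,\mathcal L_0)$ lies in the closure of the unique component of $\mathcal G^r_{g+1,d+1}$ dominating $\mathcal M_{g+1}$, so that controlling the family at $0$ really controls the general pair; this is a smoothability/Brill--Noether statement for the degenerate curve.

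Granting the family, the rest is semicontinuity. Since $h^0(\mathcal L_b)=r+1$ is constant and $\deg\mathcal L_b^2=2(d+1)>2g_{\mathcal X_b}-2$ forces $h^1(\mathcal L_b^2)=0$ on every fibre, $S^2\pi_*\mathcal L$ and $\pi_*\mathcal L^2$ are locally free of ranks $\binom{r+2}{2}$ and $2d-g+2$, and $\mu$ propagates to a morphism of these bundles whose rank is lower semicontinuous in $b$. As $\binom{r+2}{2}$ is the maximal possible rank of an injection and $2d-g+2$ the maximal possible rank of a surjection, it suffices to prove that the special map $\mu_0\colon S^2H^0(\mathcal X_0,\mathcal L_0)\to H^0(\mathcal X_0,\mathcal L_0^2)$ is injective in case (1) and surjective in case (2); semicontinuity then forces the same for general $b$, which is $(MRC)^r_{g+1,d+1}$.

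The core is a local comparison of $\mu_0$ with the genus $g$ map $\mu$ of (\ref{mmap}). Pulling sections back along $\nu$ and using the conductor sequence of the cusp, one identifies $H^0(\mathcal X_0,\mathcal L_0^q)$ with the sections of the pullback on $X$ satisfying a single first-order (cusp-direction) condition at $x$, and under this identification $\mu_0$ differs from $\mu$ only by the contribution of that tangent jet. In case (1), a class in $\ker\mu_0$ maps to a class in $\ker\mu$, which vanishes by hypothesis; a short chase then kills the residual jet term, so $\mu_0$ is injective. In case (2), surjectivity of $\mu$ already gives that the image of $\mu_0$ contains everything except possibly the one direction recorded by the cusp jet; this is exactly where the quadric $Q\in\Ker(\mu)$ is used, since $Q\supseteq X$ but $Q\not\supseteq TX$ means that at a general $x$ the restriction of $Q$ to the tangent line $T_xX$ is nonzero, so the image of $Q$ under $\mu_0$ has nonzero cusp jet and fills the missing direction, giving surjectivity of $\mu_0$.

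The main obstacle is precisely this local bookkeeping at the cusp: making the identification of $H^0(\mathcal X_0,\mathcal L_0^2)$ with a jet-constrained space exact, and matching the one extra tangent direction with evaluation along $T_xX$ so that the geometric hypothesis $Q\not\supseteq TX$ converts cleanly into the surjectivity of $\mu_0$ rather than a weaker numerical statement. The secondary obstacle is the smoothability input in the first step, since without it semicontinuity would bound the rank only for the chosen family and not for the general pair that $(MRC)^r_{g+1,d+1}$ concerns. Once both are in place, the constancy of the sheaves, the semicontinuity of rank, and the two diagram chases in cases (1) and (2) are routine.
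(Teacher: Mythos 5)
Your route (degenerate the genus $g+1$ pair to an integral cuspidal curve and conclude by fiberwise semicontinuity) is genuinely different from the paper's, but it has a gap at precisely the point where the difficulty of case (b) lives. First, your identification of the central fiber is internally inconsistent: if $\mathcal{X}_0$ has a cusp at $x$ with normalization $\nu\colon X\to\mathcal{X}_0$ and $M:=\nu^*\mathcal{L}_0$, then $H^0(\mathcal{X}_0,\mathcal{L}_0)$ is the space of sections of $M$ whose first-order jet at $x$ vanishes; if this series also had a base point at $x$ (as your identification $L=\nu^*\mathcal{L}_0(-x)$ with $H^0(\mathcal{X}_0,\mathcal{L}_0)\cong H^0(L)$ would require), the cusp condition would push every section into $H^0(M(-2x))$, a space of dimension $r$, contradicting $h^0(\mathcal{L}_0)=r+1$. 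The correct description is $W:=H^0(\mathcal{X}_0,\mathcal{L}_0)=\mathbb{C}s_0\oplus H^0(L(-x))$ with $L=M(-x)$ and $s_0\in H^0(M)\smallsetminus H^0(L)$. Consequently $S^2H^0(X,L)\not\subset S^2W$, so the key move of your case (2) --- ``the image of $Q$ under $\mu_0$ has nonzero cusp jet'' --- is undefined: $Q\in\Ker(\mu)$ is not an element of $S^2W$. Worse, under the one multiplication map through which everything factors, $S^2H^0(M)\to H^0(M^2)$, the element $Q$ maps to $\mu(Q)=0$: you have conflated the restriction of the quadratic form $Q$ to the ambient tangent line (nonzero by hypothesis) with the multiplication image of $Q$ as a section on the curve (zero, because $Q$ contains the curve). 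No choice of $Q\in\Ker(\mu)$ can ``fill a missing direction'' this way.

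Second, the claim that surjectivity of $\mu$ leaves at most one missing direction in the image of $\mu_0$ is false: since $S^2W\cap S^2H^0(L)=S^2H^0(L(-x))$, unwinding the jet conditions shows that surjectivity of $\mu_0$ is equivalent to $s_0\cdot H^0(L(-x))+\mu\bigl(S^2H^0(L(-x))\bigr)=H^0(X,L^2)$, and the a priori discrepancy between $\mu\bigl(S^2H^0(L(-x))\bigr)$ and $\mu\bigl(S^2H^0(L)\bigr)$ is $(r+1)$-dimensional, not $1$-dimensional. This missing surjectivity for the twisted-down series is a statement of exactly the same depth as the paper's Lemma \ref{lemma5.1}, and it is where the hypothesis on $TX$ must do real work: in the paper, any quadric in the image of $H^0(L)\otimes H^0(L(-2u))$ automatically contains $T_uX$ (a hyperplane meeting $X$ doubly at $u$ contains the tangent line), so $Q\not\supseteq TX$ forces $Q\notin\overline{H}_u$ and feeds a kernel-bundle dimension count. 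Note also that the paper never proves a fiberwise statement on its degenerate curve in case (b): on its nodal central fiber $C\cup E$ the multiplication map is \emph{not} surjective, and the conclusion comes from the first-order obstruction computation of Theorem \ref{hypothesis} and Lemma \ref{lemma4.2} rather than from semicontinuity alone. Your case (1) suffers a milder version of the same defect: there is no natural map $\Ker(\mu_0)\to\Ker(\mu)$ to chase, whereas the paper's easy diagram chase (Proposition \ref{prop3.1}) works only because its elliptic-tail degeneration gives $H^0(X_0,L_0)\cong H^0(C,L')$ on the nose, which your cuspidal degeneration does not. Until you prove the displayed surjectivity for $|L(-x)|$ (and an injectivity analogue for case (1)), the argument is incomplete at its core.
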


To apply theorem \ref{mainresult} to the Maxiaml Rank Conjecture, one has to verify a hypothesis in (b) which seems geometrically interesting in its own right. Hopefully there will be some other applications.

Starting form rational normal curves and canonical curves are projectively normal, we verify hypothesis (b) in some special cases and get some partial results: 

 \begin{corollary}\label{bestresult}
 Let $(X,L)$ be a general pair in $\cal{G}^r_{g,d}$ with $h^1(L)\le1$. Suppose
 $$d>\frac{5}{4}g+\frac{9}{4},\ \ \text{if}\ h^1(L)=0, \ \text{or}$$
 $$d>\frac{5}{4}g+\frac{3}{4},\ \ \text{if}\ h^1(L)=1,$$
 then $(X,L)$ is projectively normal.
 \end{corollary}
 
 It is a very well known result of Green-Lazarsfeld \cite{GL1} that any very ample line bundle $L$ on $X$ with
 \begin{eqnarray}\label{bound}deg(L)\ge2g_X+1-2h^1(L)-Cliff(X)
 \end{eqnarray}
 is projectively normal and the bound is sharp. Notice that (\ref{bound}) implies that $h^1(L)\le1$.
 
 If $X$ is general,
 $$Cliff(X)=\lfloor\frac{g_X-1}{2}\rfloor,$$ thus Green-Lazarsfeld theorem predicts projective normality for general curves if $d$ is bigger than roughly $\frac{3}{2}g$. Corollary \ref{bestresult} thus says that if $L$ is also general, we could improve the lower bound of $d$ to roughly $\frac{5}{4}g$. 
 
 The bounds in Corollary \ref{bestresult} is weaker than the bounds in \cite{BF2}.

 We could also fix a small $r$ and let $h^1$ to be arbitrarily large. This is 
 
\begin{corollary}\label{theorem1.6} The maximal rank conjecture (for quadrics) holds if $r\le4$.
\end{corollary}
The reason we can get rid of the restriction on degree of the line bundle for small $r$ is because we can always verify the hypothesis on $TC$ in Theorem \ref{mainresult} (b) if $r\le4$. Thus $(MRC)^r_{g,d}$ always imply $(MRC)^r_{g+1,d+1}$.

For higher syzygies, we do not expect analogously $\min\{k_{p,1},k_{p-1,2}\}=0$ for $p\ge2$. We refer the audience to section $2$ for a counterexample. Nevertheless, we do wish to  to obtain certain vanishing results or effective upper bounds on $k_{p,q}$.

The difficulty to generalize the inductive argument to higher syzygies is two-fold. First, there are relatively few known cases to start the induction with. There is essentially a single known starting series of examples for vanishing of syzygies, namely Voisin's solution to the generic Green conjecture. Besides Voisin's theorem, Farkas \cite{F2} proved that properties ${\bf{GV}}(2)^7_{16,21}$ and ${\bf{GV}}(3)^{10}_{22,30}$ hold. Secondly, for higher syzygies, the sufficient condition for \textquotedblleft extra" Koszul classes to be obstructed  is not as geometric.

Nevertheless, we summarize our results on higher syzygies as below
\begin{theorem}\label{badresult}Let $X$ be a general curves of genus $g$, $L$ be a general $g^r_d$ on $X$. Then
\begin{enumerate}
\item If $g\ge r+1$, $K_{p,1}(X,L)=0$ for $p\ge\lfloor\frac{r+1}{2}\rfloor$.\\

\item If $h^1(L)=1$ (which implies that $g\ge r+1$), 

$$K_{p-1,2}(X,L)=0\  \text{for}\  1\le p\le r-\lfloor\frac{g}{2}\rfloor,\ \text{and}$$
 $$k_{p-1,2}(X,L)\le(g-2r+2p-1){r-1\choose p-1}\ \text{for}\ p>r-\lfloor\frac{g}{2}\rfloor.$$
\end{enumerate}
\end{theorem}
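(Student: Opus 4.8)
The plan is to translate both statements into the cohomology of exterior powers of the kernel bundle $M_L$, defined by $0\to M_L\to H^0(L)\otimes\cal{O}_X\to L\to 0$, and then to feed Voisin's theorem \ref{generic green} into part (b) through a comparison of $M_L$ with the canonical kernel bundle $M_{K_X}$. Writing $V=H^0(L)$, taking exterior powers of the defining sequence and twisting gives, for all $p,q$, the identification $K_{p,q}(X,L)\cong\Ker\big(H^1(\wedge^{p+1}M_L\otimes L^{q-1})\to\wedge^{p+1}V\otimes H^1(L^{q-1})\big)$, which I would use as the common engine.

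For (a) I would take $q=1$, so $K_{p,1}(X,L)=\Ker\big(H^1(\wedge^{p+1}M_L)\to\wedge^{p+1}V\otimes H^1(\cal{O}_X)\big)$; dualizing by Serre duality and using $\wedge^{p+1}M_L^\vee\cong\wedge^{r-1-p}M_L\otimes L$, the vanishing $K_{p,1}(X,L)=0$ becomes the surjectivity of $\wedge^{p+1}V^\vee\otimes H^0(K_X)\to H^0(\wedge^{r-1-p}M_L\otimes L\otimes K_X)$. Since $p\ge\lfloor\frac{r+1}{2}\rfloor$ forces $r-1-p$ into the low range, and since the vanishing of $K_{p,1}$ is an open condition on $\cal{G}^r_{g,d}$, I would establish it on a single convenient member of the dominant component: cut $X$ by a general hyperplane to produce $d$ general points $\Gamma\subset\bb{P}^{r-1}$, compare the linear strand of $X$ with that of $\Gamma$ through the standard hyperplane-section sequence, and invoke the known vanishing of the upper half of the linear strand of general points. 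The hypothesis $g\ge r+1$ is what guarantees that the section is a genuinely general point set and that this comparison is exact in the relevant range.

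For (b), with $h^1(L)=1$, I would take $q=2$. Now $H^1(L)$ is one dimensional, so the engine reads $K_{p-1,2}(X,L)=\Ker\big(H^1(\wedge^pM_L\otimes L)\to\wedge^pV\big)$, and Serre duality together with $\wedge^pM_L^\vee\cong\wedge^{r-p}M_L\otimes L$ identifies the ambient space with $H^0(\wedge^{r-p}M_L\otimes K_X)$. Dualizing the connecting map exhibits $K_{p-1,2}(X,L)^\vee$ as the cokernel of a comultiplication $\psi\colon\wedge^pV^\vee\to H^0(\wedge^{r-p}M_L\otimes K_X)$, so that $k_{p-1,2}=h^0(\wedge^{r-p}M_L\otimes K_X)-\operatorname{rank}(\psi)$. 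The point of $h^1(L)=1$ is that $L=K_X(-D)$ for a rigid effective divisor $D$ of degree $g-r-1$, which yields an inclusion of kernel bundles $0\to M_L\to M_{K_X}\to N\to 0$ with $N$ of rank $g-r-1$; I would run the induced filtration of $\wedge^\bullet M_{K_X}$ to express $h^0(\wedge^{r-p}M_L\otimes K_X)$ in terms of $H^0(\wedge^\bullet M_{K_X}\otimes K_X)$ and cohomology of the low-rank pieces built from $N$. Voisin enters exactly here: through the vanishing of the canonical linear syzygies $K_{a,1}(X,K_X)$ it computes $H^0(\wedge^a M_{K_X}\otimes K_X)$ for $a\ge\lceil\frac{g-1}{2}\rceil=\lfloor\frac{g}{2}\rfloor$, i.e. for $a=r-p$ with $p\le r-\lfloor\frac{g}{2}\rfloor$, which is precisely the range in which (b) predicts vanishing.

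The main obstacle I anticipate is the bookkeeping in (b): extracting the exact value of $h^0(\wedge^{r-p}M_L\otimes K_X)$ and of $\operatorname{rank}(\psi)$ from the filtration attached to $0\to M_L\to M_{K_X}\to N\to 0$, so that the difference collapses to $(g-2r+2p-1)\binom{r-1}{p-1}$. One must show that the only term surviving Voisin's vanishing is the single graded piece carrying the factor $\binom{r-1}{p-1}$, that no higher cohomology of the $N$-pieces intrudes, and that $\psi$ is surjective in the complementary range $p\le r-\lfloor\frac{g}{2}\rfloor$ (giving exact vanishing there). A useful consistency check throughout is that $(g-2r+2p-1)$ is the Euler-characteristic-type coefficient whose sign changes precisely at $p=r-\lfloor\frac{g}{2}\rfloor$, matching the transition between the two regimes of the statement.
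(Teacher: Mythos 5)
Your reduction engine (the kernel-bundle identifications and the Serre-duality reformulations) is set up correctly, but both halves of the plan break down at their decisive step. For part (a), the passage from $X$ to its hyperplane section $\Gamma$ is not available in the form you need: Green's hyperplane-section (Lefschetz) theorem for Koszul cohomology requires vanishings such as $H^1(\cal{O}_X)=0$, which fail precisely because $g\ge r+1$, so $K_{p,1}(X,L)$ and $K_{p,1}(\Gamma,L|_\Gamma)$ differ by correction terms governed by $H^1(\cal{O}_X)$ and $H^1(L)$ that your sketch never controls. Worse, the assertion that $\Gamma$ is ``a genuinely general point set'' is not a consequence of $g\ge r+1$: what is known for hyperplane sections of general curves is uniform (linearly general) position, and whether such sections behave like general points for syzygy purposes is a well-known open problem, closely related to the maximal rank conjecture itself; and even for honestly general points the vanishing you want to quote sits inside the minimal resolution conjecture for points, which is false in the middle range of the resolution in general. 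The paper avoids all of this with a one-step degeneration: $X$ specializes to $Y\cup Z$ with $g_Y=r+1$, $L_0|_Y=K_Y$ and $L_0|_Z$ a $g^0_{d-2r}$, so all sections come from $Y$; Voisin's theorem \ref{generic green} gives $K_{p,1}(Y,K_Y)=0$ for $p\ge\lfloor\frac{r+1}{2}\rfloor$, the diagram chase of proposition \ref{prop3.1} transfers this to the nodal curve, and semicontinuity concludes.

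For part (b), your filtration attached to $0\to M_L\to M_{K_X}\to N\to 0$ is circular at exactly the point you flag as ``the main obstacle'': the graded pieces are $\wedge^iM_L\otimes\wedge^{r-p-i}N$, so computing the image of the restriction map $H^0(\wedge^{r-p}M_{K_X}\otimes K_X)\to H^0\bigl((\wedge^{r-p}M_{K_X}/\wedge^{r-p}M_L)\otimes K_X\bigr)$ requires the cohomology of twists of the $\wedge^iM_L$ --- the very groups you are trying to compute --- against exterior powers of the rank-$(g-r-1)$, negative-degree bundle $N$, for which nothing like Voisin's theorem is available. The inclusion $H^0(\wedge^{r-p}M_L\otimes K_X)\subset H^0(\wedge^{r-p}M_{K_X}\otimes K_X)$ by itself only yields the bound $\binom{g}{r-p+1}$, far larger than the target value $\binom{r+1}{p}$ that vanishing of $K_{p-1,2}$ demands, and no mechanism is proposed that produces either the exact value in the vanishing range or the quantity $(g-2r+2p-1)\binom{r-1}{p-1}$ beyond it. The paper's route is genuinely different and supplies precisely these two missing mechanisms: it attaches elliptic tails one at a time and runs the obstruction calculus of sections 3--4 (lemma \ref{lemma3.2}, theorem \ref{hypothesis}, lemma \ref{lemma4.2}); the exactness input, lemma \ref{lemma6.2}, is proved not from Voisin's theorem on the smooth curve $C$ but from Green's conjecture for curves on $K3$ surfaces applied to the cuspidal curve $C'$ of arithmetic genus $g+1$ swept out by $|K_C(2u)|$, which is what makes the threshold $p\le r-\lfloor\frac{g}{2}\rfloor$ appear; past the threshold, the stated bound comes from iterating estimate (\ref{atleast}), each of the $g-2r+2p-1$ extra elliptic tails costing at most $\binom{r-1}{p-1}$.
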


Combining Corrollary \ref{bestresult} and \ref{badresult} (a), we can determine table $1$ for general $g^r_d$ with $r\le4$:

\begin{corollary} For general pair $(X,L)$ in $\cal{G}^r_{g,d}$ with $r\le4$, $g\ge r+1$,
$$\min\{k_{p,1}(X,L),k_{p-1,2}(X,L)\}=0.$$
\end{corollary}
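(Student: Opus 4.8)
The plan is to prove the vanishing of $\min\{k_{p,1},k_{p-1,2}\}$ by splitting into the single column $p=1$ and the range $p\ge 2$, since these are governed by the two different inputs, theorem \ref{theorem1.6} and theorem \ref{badresult}(a). First I would dispose of the columns outside $1\le p\le r-1$: in the two middle rows of table $1$ the entries with $p\le 0$ or $p\ge r$ vanish by inspection (for instance $k_{0,1}=0$ since $L$ is base-point-free, and $k_{r,1}=0$), so the minimum is automatically zero there and only the range $1\le p\le r-1$ carries content.

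For the quadric column $p=1$, recall from the discussion after conjecture \ref{MRC} that $k_{1,1}=\dim\Ker(\mu)$ and $k_{0,2}=\dim\Coker(\mu)$, so $\min\{k_{1,1},k_{0,2}\}=0$ is equivalent to $\mu$ in (\ref{mmap}) having maximal rank. This is exactly the maximal rank conjecture for quadrics, which holds for all $r\le 4$ by theorem \ref{theorem1.6}; crucially this carries no hypothesis on the degree, so it applies throughout the regime $g\ge r+1$. Hence $\min\{k_{1,1},k_{0,2}\}=0$.

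For the remaining columns $2\le p\le r-1$ I would appeal to theorem \ref{badresult}(a). Since $g\ge r+1$, that theorem gives $k_{p,1}=0$ whenever $p\ge\lfloor\frac{r+1}{2}\rfloor$. The arithmetic point is that for every $r\le 4$ one has $\lfloor\frac{r+1}{2}\rfloor\le 2$ (indeed it equals $1$ for $r=1,2$ and $2$ for $r=3,4$), so the vanishing already begins at $p=2$. Therefore $k_{p,1}=0$ for all $p\ge 2$, and a fortiori $\min\{k_{p,1},k_{p-1,2}\}=0$ on the whole range $2\le p\le r-1$ (which is empty when $r\le 2$).

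The main obstacle here is not analytic but a matter of making the two ranges tile $1\le p\le r-1$ with no gap: one must check that the floor threshold of theorem \ref{badresult}(a) drops to $2$ exactly when $r$ reaches $3$ and $4$, leaving $p=1$ as the single uncovered column, which is then precisely the quadric case resolved by the maximal rank conjecture for small $r$. Once both vanishing statements are in hand, concatenating them yields $\min\{k_{p,1},k_{p-1,2}\}=0$ for all $p$; combined with the fact (noted in the introduction) that $k_{p,1}-k_{p-1,2}$ is a constant depending only on $g$, $r$, $d$ and $p$, this determines both middle rows of table $1$ and hence the whole table.
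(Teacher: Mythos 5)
Your proposal is correct and is essentially the argument the paper intends: the column $p=1$ is exactly the maximal rank conjecture for quadrics, settled for $r\le4$ by theorem \ref{theorem1.6} with no degree restriction, and the columns $2\le p\le r-1$ are covered by theorem \ref{badresult}(a) since $\lfloor\frac{r+1}{2}\rfloor\le2$ when $r\le4$, so the two ranges tile $1\le p\le r-1$ with no gap. The only discrepancy is that the paper's text says the corollary follows by ``combining theorem \ref{bestresult} and \ref{badresult}(a)''; citing theorem \ref{bestresult} there appears to be a slip, since that theorem carries $h^1(L)\le1$ and degree hypotheses the corollary does not impose, and your reading via theorem \ref{theorem1.6} is the one that actually makes the argument work in all degrees.
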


The organization of this paper is as follows. In Section 2, we review some basic facts on Koszul cohomology of general curves. In Section 3, we study the Koszul cohomology of the central fiber $(X_0,L_0)$. We explicitly write down the generators of the \textquotedblleft extra\textquotedblright \ Koszul classes in $K_{r-p,0}(X_0,L_0;\omega_{X_0})\cong K_{p-1,2}(X_0,L_0)^{\lor}$. Section 4 contains a computation of the obstructions for these classes to deform and Section 5 gives a sufficient condition for the obstruction classes to be linearly independent and a proof of Theorem \ref{generalcase}. In Section 6, we focus on $p=1$ case, we prove Theorem \ref{mainresult} and Corollaries \ref{bestresult} and 1.8. 
Finally in Section $7$, we consider higher syzygies for line bundles with $h^1=1$. In some special range of $p$, we are able to prove some vanishing results as in Theorem \ref{badresult}.

{\bf Acknowledgements.} This work is a continuation of my thesis project. I would like to thank my thesis advisor Herb Clemens for suggesting the problem and method and his constant support on this work. 
I would also like to thank Aaron Bertram, Gavril Farkas and Joe Harris for generously sharing their ideas on this problem. Last but not the least, I thank the referee for the helpful comments and suggestions to improve the paper.

\section{ Koszul Cohomology of general curves}

 We first summarize several special properties of Koszul cohomology groups on general curves over $\mathbb{C}$. We refer to \cite{AN} and \cite{E} for general facts about Koszul cohomology.
 
 \begin{prop}Suppose $X$ is a general curve, and $L$ is a complete $g^r_d$ on $X$, then the following holds
 \begin{enumerate}
  \item $K_{p,0}(X,L)=0\text{\ except\  when\ }p=0 \text{\ and\ } k_{0,0}(X,L)=1$;
 \item $K_{p,q}(X,L)=0\text{\ for\ }q\ge4$;
 \item $K_{p,3}(X,L)=0\text{\ except\  when\ }p=r-1 \text{\ and\ }  k_{r-1,3}(X,L)=h^1(L)$.
 \end{enumerate}
 \end{prop}
 \begin{proof} Statement (a) follows from the definition of Koszul cohomology.

To prove (b) and (c), we use the following facts.
 \begin{enumerate}
\item [i)] The multiplication map
$$H^0(X,L)\otimes H^0(X,K_X\otimes L^{-1})\longrightarrow H^0(X,K_X)$$
is injective. This is the Gieseker-Petri theorem.

\item [ii)] $$H^0(X,K_X\otimes L^{-2})=0.$$
\end{enumerate} This is a direct consequence of i) (c.f. \cite{AC}).

Statement (b) follows from ii) and the duality theorem (c.f. \cite{AN} Sec. 2.3) of Koszul cohomology
\begin{eqnarray}\label{dual}
K_{p,q}(X,L)=K_{r-1-p,2-q}(X,L;K_X)^{\lor}.
\end{eqnarray}
To prove (c), we first apply (\ref{dual}) and note that the Koszul differential $d_{r-1-p,-1}$ factors as
$$\xymatrix{\bigwedge^{r-1-p}H^0(L)\otimes H^0(K_X\otimes L^{-1})\ar[r]^-{d_{r-1-p,-1}}\ar[d]^-{\lrcorner\otimes Id}&\bigwedge^{r-2-p}H^0(L)\otimes H^0(K_X)\\
\bigwedge^{r-2-p}H^0(L)\otimes H^0(L)\otimes H^0(K_X\otimes L^{-1})\ar[ur]^-{Id\otimes\mu}}.$$
By i) both $\lrcorner\otimes Id$ and $Id\otimes\mu$ are injective.
\end{proof}

As a consequence, we have
\begin{corollary}\label{cor2.2} Let $X$ be a general curve, $L$ be a globally generated $g^r_d$ with $r\ge1$. 

\begin{enumerate}
\item $L$ is normally generated if and only if the multiplication map
$$\mu: S^2H^0(X,L)\longrightarrow H^0(X,L^2)$$
is surjective.
\item If $L$ is normally generated, the homogeneous ideal $I_X$ is generated by quadrics and cubics.
\end{enumerate}
\end{corollary}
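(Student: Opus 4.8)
The plan is to extract both assertions directly from the Betti table (Table~2) assembled above, feeding it into the two structural equivalences (\ref{N0}) and (\ref{N1}). No fresh geometric input is required: the entire content is a matter of locating the correct entries of the table and invoking the vanishing already recorded there, which itself rested on (\ref{dual}), (\ref{cor}) and (\ref{petri}).

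For part (a), I would begin from (\ref{N0}), which says that $L$ is normally generated exactly when $K_{0,q}(X,L)=0$ for all $q\ge 2$. Scanning the rightmost ($p=0$) column of Table~2, the groups $K_{0,q}$ vanish automatically for $q\ge 4$, and the entry $K_{0,3}(X,L)\cong K_{r-1,-1}(X,L;K_X)^{\lor}$ vanishes by the factorization of the Koszul differential through the injections $\lrcorner\otimes\mathrm{Id}$ and $\mathrm{Id}\otimes\mu$ used above (valid once $r\ge 2$). Hence the sole surviving obstruction is $K_{0,2}(X,L)$, and since $K_{0,2}(X,L)=\Coker(\mu)$ by definition, its vanishing is precisely the surjectivity of $\mu$. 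This gives the equivalence in (a).

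For part (b), I would assume $L$ normally generated so that (\ref{N0}) holds, and then translate ``generated by quadrics and cubics'' into Koszul language: under $K_{p,q}\cong M_{p,p+q}$ the minimal generators of $I_X$ in degree $q$ are counted by $M_{1,q}=K_{1,q-1}(X,L)$, so the claim is exactly $K_{1,q}(X,L)=0$ for all $q\ge 3$. The $p=1$ column of Table~2 gives $K_{1,q}=0$ for $q\ge 4$ unconditionally, while the remaining group $K_{1,3}(X,L)\cong K_{r-2,-1}(X,L;K_X)^{\lor}$ again vanishes by the same factorization argument, now valid once $1\le r-2$. I expect the genuine point of care, and the main obstacle, to be precisely the top ($q=3$) row of the table: its only nonzero entry sits in column $p=r-1$ and equals $h^1(L)$, so the vanishing of $K_{0,3}$ and of $K_{1,3}$ degenerates exactly when $r-1$ falls on $p=0$ or $p=1$, i.e. in the small-corank cases $r\le 2$. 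Those boundary cases I would dispose of separately (for $r=1$ the map is not an embedding and the statements are vacuous), after which both parts follow as a transcription of the established table.
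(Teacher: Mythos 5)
Your core argument is exactly the paper's proof: read both assertions off Table 2, using (\ref{N0}) for part (a), the identity $K_{0,2}(X,L)=\Coker(\mu)$, and the fact that $k_{1,q}$ counts the minimal generators of $I_X$ of degree $q+1$ for part (b). In the range where the table has its generic shape, namely $r\ge 3$ (and already $r\ge 2$ for part (a)), your write-up is correct and in fact more careful than the paper's three-line proof, since you track exactly which entries of the $q=3$ row require the factorization argument through $\lrcorner\otimes\mathrm{Id}$ and $\mathrm{Id}\otimes\mu$.

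The gap is your final claim that the boundary cases $r\le 2$ can be ``disposed of separately.'' They cannot, because there the corollary is actually \emph{false}, and the failure sits exactly at the entry $k_{r-1,3}=h^1(L)$ that you located. For $r=2$: the general pair in $\cal{G}^2_{3,4}$ is a smooth plane quartic $(X,K_X)$ (here $\rho=0$, $h^1(K_X)=1$); it is normally generated, since smooth plane curves are projectively normal, yet $I_X=(F)$ with $\deg F=4$, so $I_X$ is not generated by quadrics and cubics --- consistently, $k_{1,3}=h^1(K_X)=1$, i.e.\ $M_{1,4}\ne 0$. So no side argument can rescue (b) for $r=2$. For $r=1$: part (b) is indeed vacuous (the map $S\to R$ is injective, so $I_X=0$), but part (a) is \emph{not} vacuous under the paper's own characterization (\ref{N0}) of normal generation, and it fails as well: for the general $(X,L)\in\cal{G}^1_{2,2}$, i.e.\ a genus-$2$ curve with $L=K_X$, the map $\mu$ is surjective (its image is the pullback of $H^0(\mathcal{O}_{\bb{P}^1}(2))$, which already fills the $3$-dimensional $H^0(L^2)$), while $K_{0,3}(X,L)^{\lor}\cong H^0(K_X\otimes L^{-1})\ne 0$ by (\ref{dual}) and (\ref{cor}), so $L$ is not normally generated. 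The honest conclusion is that the argument proves the corollary exactly for $r\ge3$, together with (a) for $r=2$ and the cases $r\le 2$ with $h^1(L)=0$; the remaining cases are genuine exceptions that must be excluded from the statement rather than handled by an extra argument. Note that the paper's own proof carries the same silent restriction (it asserts without qualification that $K_{0,q}=0$ and $k_{1,q}=0$ for $q\ge 3$), so your extra care exposed a real imprecision --- but the correct fix is to restrict the statement, not to look for the missing argument.
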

\begin{proof}
The only possible nonzero $K_{0,q}$ for $q\ge2$ is $K_{0,2}(X,L)=\Coker(\mu)$. If $K_{0,2}(X,L)=0$, $L$ is normally generated. Since $k_{1,q}$ is the number of minimal generators of $I_X$ of degree $q+1$, (b) follows.
\end{proof}

Moreover, since taking cohomology does not change the Euler characteristic of the complex, we have for any $1\le p\le r-1$,
\begin{eqnarray}k_{p,1}(X,L)-k_{p-1,2}(X,L)\nonumber&=&\sum_{i+j=p+1}(-1)^{j+1}\ \text{dim}_{\mathbb{C}}((\bigwedge^{i}V)\otimes H^0(X,L^{j}))\nonumber\\&=&{r+1\choose p}(g-d+r)-{r+1\choose p+1}g+{r-1\choose p}d+{r\choose p+1}(g-1)\nonumber
\end{eqnarray}

Denote this number $b_p(X,L)$, which only depends on $g$, $r$, $d$, $p$. Therefore, to determine the Koszul cohomology of $(X,L)$, it suffices to determine either row $q=1$ or $q=2$.

\begin{remark}
 Based on the maximal rank conjecture, one might expect that analogously 
 \begin{eqnarray}\label{nottrue}
 \min\{k_{p,1}(X,L),k_{p-1,2}(X,L)\}=0
 \end{eqnarray}
  for general $(X,L)$. But this is not the case. In fact, F. Schreyer proved in his thesis (c.f Green \cite{Gr} (4.a.2) for more details) that any curve $X$ of genus $g$, there exists a number $d_0$ such that if $\deg(L)=d\ge d_0$, then
$$K_{p,2}(X,L)\ne0\ \text{if}\ r-1\ge p\ge r-g.$$

On the other hand, it follows from a theorem of Green and Lazarsfeld (c.f  \cite{GL84} or \cite{AN} Corollary 3.39) that for $d$ large,
$$K_{p,1}(X,L)\ne 0\ \text{if}\ 1\le p\le r-\lfloor\frac{g}{2}\rfloor-2.$$
Thus for $r-g+1\le p\le r-\lfloor\frac{g}{2}\rfloor-2$, we do not have (\ref{nottrue}).
\end{remark}

\section{Koszul cohomology of the central fiber}

Let $L'$ be a $g^r_d$ on a smooth curve $C$ of genus $g$ and $X_0=C\cup E$ be the reducible nodal curve consisting of $C$ and an elliptic curve $E$ meeting at a general point $u$. Let $L_0$ be the line bundle on $X_0$ such that 
$$L_0|_C=L',$$
and
$$L_0|_E=\cal{O}_E(v)$$
where $v\ne u$. We would like to study the relations between $K_{p,q}(C,L')$ and $K_{p,q}(X_0,L_0)$ in this section.

First observe that by construction, any (global) section of $L'$ on $C$ extends uniquely to a section of $L_0$ on $X_0$, thus we have a natural isomorphism $\phi: H^0(C,L')\cong H^0(X_0,L_0)$. Moreover, by Riemann-Roch, $h^1(C,L')=h^1(X_0,L_0)$, and there is a natural identification
$$H^0(C,K_C\otimes L'^{-1})\cong H^0(X_0, \omega_{X_0}\otimes L_0^{-1}).$$ 

A first consequence is

 \begin{prop} \label{prop3.1}If $K_{p,1}(C,L')=0$, then $K_{p,1}(X_0,L_0)=0$.
 \end{prop}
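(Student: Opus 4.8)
The plan is to realize restriction to the component $C$ as a morphism of Koszul complexes and then run a short diagram chase, exploiting that this morphism is an isomorphism in precisely the two coefficient degrees $q=0$ and $q=1$ that govern $K_{p,1}$.

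First I would set up the comparison map. Since $X_0$ is connected, $H^0(X_0,\mathcal{O}_{X_0})=\mathbb{C}=H^0(C,\mathcal{O}_C)$, and restricting sections to $C$ gives maps $\mathrm{res}_q\colon H^0(X_0,L_0^q)\to H^0(C,L'^q)$ for every $q$. Because restriction commutes with multiplication of sections and carries $H^0(L_0)$ to $H^0(L')$ compatibly with the wedge factors, the maps $\{\mathrm{res}_q\}$ (tensored with the maps on $\wedge^\bullet H^0(L_0)$ induced by $\phi^{-1}=\mathrm{res}_1$) assemble into a chain map $\Phi$ from the Koszul complex of $(X_0,L_0)$ to that of $(C,L')$. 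I would check this compatibility directly against the formula for $d_{p,q}$ in the introduction.

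Next I would isolate the two isomorphisms I actually need. In the row $q=1$, the relevant stretch of the $(X_0,L_0)$ complex is
$$\wedge^{p+1}H^0(L_0)\otimes H^0(\mathcal{O}_{X_0})\longrightarrow \wedge^{p}H^0(L_0)\otimes H^0(L_0)\longrightarrow \wedge^{p-1}H^0(L_0)\otimes H^0(L_0^2),$$
and $\Phi$ is an isomorphism on the first term (as $\mathrm{res}_0$ is the identity on constants and $\phi$ is an isomorphism) and on the second term (as $\mathrm{res}_1=\phi^{-1}$ is an isomorphism). The point I want to stress is that no control of $\mathrm{res}_2$ is required: restriction is genuinely \emph{not} an isomorphism on $H^0(L_0^2)$, its kernel being $H^0(E,\mathcal{O}_E(2v-u))$, but this degree-$2$ term never enters the argument. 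With the two isomorphisms in hand, the diagram chase is immediate: given a Koszul cycle $\alpha\in\wedge^{p}H^0(L_0)\otimes H^0(L_0)$, the element $\Phi(\alpha)$ is a cycle in the $(C,L')$ complex, so the hypothesis $K_{p,1}(C,L')=0$ writes $\Phi(\alpha)=d\gamma'$ with $\gamma'\in\wedge^{p+1}H^0(L')\otimes H^0(\mathcal{O}_C)$; lifting $\gamma'$ through $\Phi$ on the degree-$0$ term to $\gamma$ and using that $\Phi$ is an isomorphism on the degree-$1$ term yields $d\gamma=\alpha$, whence $[\alpha]=0$. As $\alpha$ was arbitrary, $K_{p,1}(X_0,L_0)=0$.

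There is no genuine obstacle here; the content is the standard fact that a chain map which is an isomorphism in two consecutive spots induces an injection on the intervening cohomology, combined with the vanishing on $C$. The only thing worth recording carefully is the identification $\mathrm{res}_1=\phi^{-1}$ and the observation that the quadric-coefficient space $H^0(L_0^2)$ plays no role—this is exactly what makes the implication one-directional and painless, and it is also the feature that fails once one tries to run the analogous comparison in the row $q=2$, which is why the $K_{p-1,2}$ case treated later is substantially harder.
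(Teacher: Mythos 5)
Your proof is correct and is essentially the paper's own argument: both realize restriction to $C$ as a map of Koszul complexes whose vertical arrows are isomorphisms on the $\wedge^{p+1}H^0(L_0)$ and $\wedge^{p}H^0(L_0)\otimes H^0(L_0)$ terms (via $\phi^{-1}$), then conclude by the same diagram chase from exactness of the bottom row. Your added observations—that $\mathrm{res}$ on $H^0(L_0^2)$ need not be (and is not) an isomorphism, and that this is precisely why the $q=2$ row resists the same method—are accurate and consistent with the paper's subsequent treatment of $K_{p-1,2}$ via duality.
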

\begin{proof}
Consider the following commutative diagram
$$\xymatrix{\bigwedge^{p+1}H^0(L_0)\ar[r]\ar[d]^-{\cong}&\bigwedge^{p}H^0(L_0)\otimes H^0(L_0)\ar[r]\ar[d]^-{\cong}&\bigwedge^{p-1}H^0(L_0)\otimes H^0(L_0^2)\ar[d]\\
\bigwedge^{p+1}H^0(L')\ar[r]&\bigwedge^{p}H^0(L')\otimes H^0(L')\ar[r]&\bigwedge^{p-1}H^0(L')\otimes H^0(L'^2)}
$$
where the vertical arrows are restriction maps to $C$. The hypothesis says that the lower row is exact in the middle, a simple diagram chasing gives the conclusion.
\end{proof}

The argument in proposition \ref{prop3.1} does not generalize to the case $q=2$ because $H^0(C,L'^2)$ is not isomorphic to $H^0(X_0,L_0^2)$. Instead, we dualize using (\ref{dual})
$$K_{p-1,2}(C,L')^{\lor}\cong K_{r-p,0}(C,L';K_{C})$$
and compare $K_{r-p,0}(C,L';K_{C})$ with $K_{r-p,0}(X_0,L_0;\omega_{X_0})$.

Here $\omega_{X_0}$ is the dualizing sheaf of $X_0$. Its restriction to $C$ and $E$ are line bundles $K_C(u)$ and $K_E(u)$ respectively. A global section of the dualizing sheaf consists of (global) one forms on $C$ and $E$, viewed as sections of $K_C(u)$ and $K_E(u)$ which vanish at $u$ respectively.

\vspace{2cm}

\begin{figure}[h]
 \psfrag{L'}{ $L'$}
  \psfrag{O(v)}{ $\mathcal{O}_E(v)$}
\psfrag{KC(u)}{ $K_C(u)$}
\psfrag{KE(u)}{ $K_E(u)$}
\psfrag{KCL0-1}{ $K_C(u)\otimes L'^{-1}$}
\psfrag{KE(u-v)}{ $K_E(u-v)$}
\psfrag{KCL'(u)}{ $K_C(u)\otimes L'$}
\psfrag{KE(u+v)}{$K_E(u+v)$}
\psfrag{L0}{$L_0$}
\psfrag{Omega}{$\omega_{X_0}$}
\psfrag{OmegaL0-1}{$\omega_{X_0}\otimes L_0^{-1}$}
\psfrag{OmegaL0}{$\omega_{X_0}\otimes L_0$}

   \includegraphics[scale=0.6]{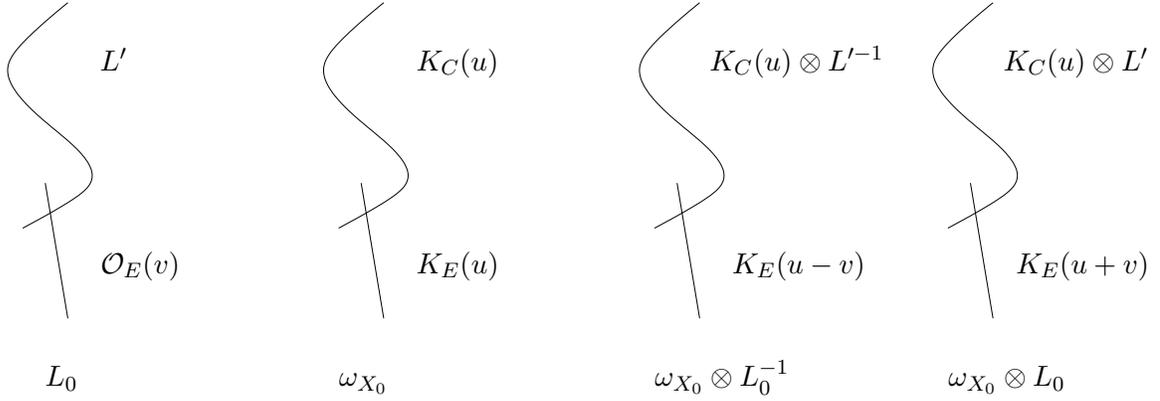}
  \caption{The line bundles on the central fiber }
  \label{figure1}
\end{figure}

\vspace{2cm}

 Figure \ref{figure1} describes the various line bundles in question on $X_0$ and their restrictions to each components. The S-shaped curve is $C$ and the straight line is $E$.
  
{\bf Choose $\{\omega_0,...,\omega_{g-1}\}$ a basis of $H^0(C,K_C)$ and $\{\omega_{g}\}$ a basis of $H^0(E,K_{E})$. We will think of $\omega_i$ for $0\le i\le g-1$ as sections in $H^0(K_C(u))$ which vanishes on $u$ and then extend it over $E$ by the zero section.  Such a section belongs to $H^0(\omega_{X_0})$ and we still denote it as $\omega_i$. Similarly we obtain $\omega_g\in H^0(\omega_{X_0})$ with $\omega_g|_C=0$.}

In this way, we obtain a natural identification
$$\psi: H^0(C,K_C)\oplus H^0(E,K_{E})\cong H^0(C,K_C(u))\oplus H^0(E,K_{E}(u))\cong H^0(X_0,\omega_{X_0}).$$
and 
$$H^0(X_0,\omega_{X_0})=span\{\omega_i|\ i=1,...,g\}$$
Notice also that every section in $H^0(X_0,\omega_{X_0})$ vanish at $u$.

Now suppose $K_{r-p,0}(C,L';K_{C})=0$, we would like to show that $K_{r-p,0}(X_0,L_0;\omega_{X_0})$ can be generated by pure tensors in
$$\wedge^{r-p}H^0(X_0,L_0)\otimes H^0(X_0,\omega_{X_0}).$$

To this end, consider the following commutative diagram
\begin{eqnarray}\nonumber
\xymatrix{\wedge^{r-p+1}H^0(L')\otimes H^0(K_C\otimes L'^{-1})\ar[r]\ar[d]^{\cong}&\wedge^{r-p}H^0(L')\otimes H^0(K_C)\ar[r]\ar[d]^{\phi\otimes\psi}&\wedge^{r-p-1}H^0(L')\otimes H^0(K_C\otimes L')\ar[d]\\
\wedge^{r-p+1}H^0(L_0)\otimes H^0(\omega_{X_0}\otimes L_0^{-1})\ar[r]^-{\delta_{-1}}&\wedge^{r-p}H^0(L_0)\otimes H^0(\omega_{X_0})\ar[r]^-{\delta_0}&\wedge^{r-p-1}H^0(L_0)\otimes H^0(\omega_{X_0}\otimes L_0)}
\end{eqnarray}
the left vertical arrow is an isomorphism because any section in $H^0(X_0, \omega_{X_0}\otimes L_0^{-1})$ restricts to zero on the $E$ component.

Now let $\{\sigma_0,...,\sigma_r\}$ be a basis of $H^0(C,L')$. Extend $\sigma_k$ uniquely to $X_0$ to form a basis of $H^0(X_0,L_0)$, still denoting them by $\sigma_k$. 

We can write any element in $\Ker(\delta_0)$ as
$$\sum_{k_1,...,k_{r-p},j\le g-1}\alpha_{k_1,...,k_{r-p},j}\ \sigma_{k_1}\wedge...\wedge\sigma_{k_{r-p}}\otimes\omega_j+\sum_{k_1,...,k_{r-p}}\beta_{k_1,...,k_{r-p}}\ \sigma_{k_1}\wedge...\wedge\sigma_{k_{r-p}}\otimes\omega_{g}$$

Since the image under $\delta_0$ of the second term $\beta$ restrict to $0$ on $C$ (since $\omega_g$ does), so does the image of the first term $\alpha$. By our assumption, the top row of the above diagram is exact in the middle and therefore  $\alpha\in\Im(\delta_{-1})$. 

We conclude that, 
$$\sum_{k_1,...,k_{r-p}}\beta_{k_1,...,k_{r-p}}\sigma_{k_1}\wedge...\wedge\sigma_{k_{r-p}}\otimes\omega_{g}\in\Ker(\delta_0)$$
and this can happen only if 
 $$\sum_{k_1,...,k_{r-p}}\beta_{k_1,...,k_{r-p}}\sigma_{k_1}\wedge...\wedge\sigma_{k_{r-p}}\in\bigwedge^{r-p}V,$$
 where $V\subset H^0(X_0,L_0)$ is the codimension one subspace consisting of sections which restrict to zero on $E$. Also it is easy to see that a basis of 
$$\bigwedge^{r-p}V\otimes \bb{C}\cdot\omega_g$$
 are linearly in dependent even modulo $\Im(\delta_{-1})$.

We have proven
\begin{lemma} \label{lemma3.2}Notation as above, if $K_{r-p,0}(C,L';K_C)=0$, we have an isomorphism 
$$\xymatrix{K_{r-p,0}(X_0,L_0;\omega_{X_0})\ar[r]^-{\cong}&\bigwedge^{r-p}V\otimes \bb{C}\cdot\omega_g.}$$

\end{lemma}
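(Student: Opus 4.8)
The plan is to extract the content of Lemma \ref{lemma3.2} directly from the commutative diagram and the case analysis that the author has already set up, so the proof is really a matter of organizing the observations into a clean isomorphism. First I would fix the notation: let $V\subset H^0(X_0,L_0)$ be the codimension-one subspace of sections vanishing on $E$ (equivalently, the image of $H^0(C,L')$ sections whose $E$-component is zero), and recall the chosen bases $\{\sigma_k\}$ of $H^0(L_0)$ and $\{\omega_0,\dots,\omega_g\}$ of $H^0(\omega_{X_0})$ with $\omega_i|_E=0$ for $i\le g-1$ and $\omega_g|_C=0$. The target group $K_{r-p,0}(X_0,L_0;\omega_{X_0})$ is $\Ker(\delta_0)$ modulo $\Image(\delta_{-1})$, and I want to produce a map to $\bigwedge^{r-p}V\otimes H^0(E,K_E)$ and show it is an isomorphism.

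The construction of the map goes as follows. Given any $\xi\in\Ker(\delta_0)$, write it in the $\alpha$-part (tensored with $\omega_j$, $j\le g-1$, i.e.\ the $H^0(C,K_C)$ summand) plus the $\beta$-part (tensored with $\omega_g$, i.e.\ the $H^0(E,K_E)$ summand), exactly as in the displayed decomposition. I would send $\xi$ to the $\beta$-part, interpreted as an element of $\bigwedge^{r-p}V\otimes H^0(E,K_E)$. The work of the preceding paragraphs shows this is well-defined and realizes the isomorphism: since $\omega_g|_C=0$, the image of the $\beta$-part under $\delta_0$ restricts to $0$ on $C$, hence so does the image of the $\alpha$-part; because the top row of the diagram is exact in the middle (this is precisely the hypothesis $K_{r-p,0}(C,L';K_C)=0$ together with the isomorphisms on the two ends of the diagram), the $\alpha$-part lies in $\Image(\delta_{-1})$ and thus vanishes in the quotient $K_{r-p,0}(X_0,L_0;\omega_{X_0})$. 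This shows every class is represented purely by its $\beta$-part and that the $\beta$-part itself must lie in $\bigwedge^{r-p}V$ (rather than all of $\bigwedge^{r-p}H^0(L_0)$) for $\xi$ to be in $\Ker(\delta_0)$, which gives surjectivity onto $\bigwedge^{r-p}V\otimes H^0(E,K_E)$ and identifies the map.

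For injectivity I would use the last observation in the excerpt: the pure tensors $\sigma_{k_1}\wedge\cdots\wedge\sigma_{k_{r-p}}\otimes\omega_g$ with all $\sigma_{k_i}\in V$ are linearly independent even modulo $\Image(\delta_{-1})$. Concretely, if the $\beta$-part of a cycle $\xi$ vanishes, then $\xi$ equals its $\alpha$-part, which we just argued lies in $\Image(\delta_{-1})$, so $\xi=0$ in Koszul cohomology; conversely a nonzero element of $\bigwedge^{r-p}V\otimes H^0(E,K_E)$ cannot become trivial in the quotient by the independence statement. Combining well-definedness, surjectivity, and injectivity yields the claimed isomorphism.

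The main obstacle, and the one place where I would be most careful, is verifying that the map is genuinely well-defined on cohomology rather than just on cycles — that is, checking that modifying a representative $\xi$ by an element of $\Image(\delta_{-1})$ does not change its $\beta$-part. Here I would lean on the structure of the left column of the diagram: any element of $\Image(\delta_{-1})$ comes from $\wedge^{r-p+1}H^0(L_0)\otimes H^0(\omega_{X_0}\otimes L_0^{-1})$, and since sections of $\omega_{X_0}\otimes L_0^{-1}$ restrict to zero on $E$, one checks that boundaries contribute only to the $\alpha$-part (the $H^0(C,K_C)$-summand) and never to the $\omega_g$-summand. This compatibility is exactly what makes "take the $\beta$-part" descend to a map out of the quotient, and confirming it rigorously is the technical heart of the argument; everything else is the bookkeeping already carried out in the paragraphs preceding the lemma.
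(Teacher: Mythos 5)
Your proposal is correct and follows essentially the same route as the paper, which proves the lemma in the paragraphs preceding its statement via exactly your $\alpha$/$\beta$ decomposition of a cycle relative to $H^0(\omega_{X_0})\cong H^0(C,K_C)\oplus H^0(E,K_E)$: exactness of the top row kills the $\alpha$-part, the cycle condition forces the $\beta$-part into $\bigwedge^{r-p}V\otimes H^0(E,K_E)$, and independence modulo $\Image(\delta_{-1})$ holds because sections of $\omega_{X_0}\otimes L_0^{-1}$ vanish on $E$. Your "well-definedness of the projection to the $\beta$-part" is precisely the paper's observation that such classes are linearly independent even modulo $\Image(\delta_{-1})$, so the two arguments coincide in substance.
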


\section{Infinitesimal calculations}

In this section, we carry out the computation of first order obstructions described in the introduction. We will use the deformation theory of complexes which was developed in \cite{GL}. The general set up is as below.

Let $S$ be a smooth variety, and $F^{\bullet}$ be a bounded complex of locally free sheaves on $S$:
$$\xymatrix{...\ar[r]&F^{p+1}\ar[r]^-{d_{p+1}}&F^p\ar[r]^-{d_p}&F^{p-1}\ar[r]&...}$$
Given a point $t\in S$, denote $F^{\bullet}(t)$ the complex of vector spaces at $t$ determined by the fibers of $F^{\bullet}$, i.e.
$$F^{\bullet}(t)=F^{\bullet}\otimes\mathbb{C}(t),$$
where $\mathbb{C}(t)$ is the residue field of $S$ at $t$. 

The deformation theory of $H^i(F^{\bullet}(t))$ as $t$ moves near $0\in S$ is controlled by the derivative complex, which associates a tangent vector $v\in T_0S$ a complex:
$$\xymatrix{...\ar[r]&H^{p+1}(F^{\bullet}(0))\ar[r]^-{D_v(d_{p+1})}&H^p(F^{\bullet}(0))\ar[r]^-{D_v(d_p)}&H^{p-1}(F^{\bullet}(0))\ar[r]&...}$$ 

A (co)homology class $[c]\in H^p(F^{\bullet}(0))$ deforms to first order along $v$ if and only if $D_v(d_p)([c])=0\in H^{p-1}(F^{\bullet}(0))$.

To describe the $D_v(d_p)$, recall that a tangent vector $v\in T_0S$ corresponds to an embedding of the dual numbers $D$ into $S$. So one gets a short exact sequence
$$\xymatrix{0\ar[r]&\mathbb{C}(0)\ar[r]&D\ar[r]&\mathbb{C}(0)\ar[r]&0}.$$
Tensoring the sequence with $F^{\bullet}$ yields a short exact sequence of complexes,
which in turn gives rise to connecting homomorphisms
$$\xymatrix{H^p(F^{\bullet}\otimes\mathbb{C}(0))\ar[r]^-{D_v(d_p)}\ar@{=}[d]&H^{p-1}(F^{\bullet}\otimes\mathbb{C}(0))\ar@{=}[d]\\H^p(F^{\bullet}(0))&H^{p-1}(F^{\bullet}(0))&}$$
One checks that $D_v(d_{p})\circ D_v(d_{p+1})=0$.

Now Let $(X_0,L_0)$ be the pair constructed in the previous section. We will further assume that both $(C,L')$ and the crossing point $u$ are general. $(X_0,L_0)$ determines a limit linear series in the sense of Eisenbud and Harris \cite{EH1}. By counting Brill-Noether numbers, it is easy to see this limit linear series is deformable to general pairs $(X_t,L_t)$. So let $\mathcal{L}\rightarrow\mathcal{X}\rightarrow\Delta$ be the total space of an one parameter family of general pairs $(X_t,L_t)\in\cal{G}^r_{g+1,d+1}$ degenerating to $(X_0,L_0)$. We will apply the deformation theory described above to the Koszul complex computing $K_{r-p,0}(X_t,L_t;\omega_{X_t})$:
$$\xymatrix{\wedge^{r-p+1}H^0(L_t)\otimes H^0(\omega_{X_t}\otimes L_t^{-1})\ar@{^{(}->}[r]&\wedge^{r-p}H^0(L_t)\otimes H^0(\omega_{X_t})\ar[r]^-{\delta_t}&\wedge^{r-p-1}H^0(L_t)\otimes H^0(\omega_{X_t}\otimes L_t).}$$

By Gieseker-Petri theorem, the left arrow is injective for all $t$ (even at time zero), so $k_{r-p,0}(X_t,L_t;\omega_{X_t})$ can only go up at $t=0$ if $\Ker(\delta_t)$ does. We would like to compute the derivative of $\delta_t$ at $t=0$:
\begin{eqnarray}\label{derivative}
\xymatrix{K_{r-p,0}(X_0,L_0;\omega_{X_0})\ar[r]^-{D(\delta_t)|_{t=0}}&K_{r-p-1,1}(X_0,L_0;\omega_{X_0}).}
\end{eqnarray}

To illustrate the idea, let us first take a look at the baby case when $p=r-1$ (On the other hand, the main case we are interested in is the case $p=1$). The general case is just notationally more complicated. In this special case, the Koszul differential $\delta_t$ becomes the multiplication map $\mu_t$
$$\xymatrix{\wedge^{2}H^0(L_t)\otimes H^0(\omega_{X_t}\otimes L_t^{-1})\ar@{^{(}->}[r]&H^0(L_t)\otimes H^0(\omega_{X_t})\ar[r]^-{\mu_t}& H^0(\omega_{X_t}\otimes L_t)}$$
 and the derivative map is
 $$\xymatrix{K_{1,0}(X_0,L_0;\omega_{X_0})\ar[r]^-{D(\mu_t)|_{t=0}}&K_{0,1}(X_0,L_0;\omega_{X_0}).}$$

For simplicity, denote $\omega_g\in H^0(X_0,\omega_{X_0})$ by $\omega$. By Lemma \ref{lemma3.2}, if $K_{1,0}(C,L',K_C)=0$, we have
$$\xymatrix{K_{1,0}(X_0,L_0;\omega_{X_0})\ar[r]^-{\cong}&V\otimes \bb{C}\cdot\omega\subset V\otimes H^0(X_0,\omega_{X_0}).}$$

\begin{remark}
Even if $K_{1,0}(C,L',K_C)\ne0$, we nevertheless have $V\otimes \bb{C}\cdot\omega\subset K_{1,0}(X_0,L_0;\omega_{X_0}).$
\end{remark}


So let $\sigma\in V$. By the description of the derivative complex at the beginning of this section, to compute 
$$D(\mu_t)|_{t=0}(\sigma\otimes\omega),$$
we have to lift $\sigma\otimes\omega$ to first order in $t$, apply the Koszul differential $\mu_t$ to the lifting, then restrict the outcome divided by $t$ to $X_0$.

So let $\tilde{\sigma}$, $\tilde{\omega}$ be sections of $\cal{L}$ and $\omega_{\cal{X}/\Delta}$ extending $\sigma$ and $\omega$ respectively.

Since $\tilde{\sigma}$ vanishes on $E$ and $\tilde{\omega}$ vanishes on $C$, we can write 
\begin{eqnarray}\label{4.1}\tilde{\sigma}=\tilde{\sigma}'s_{E}
\end{eqnarray}
and 
\begin{eqnarray}\label{4.2}\tilde{\omega}=\tilde{\omega}'s_{C},
\end{eqnarray}
where $s_{E}$ (resp. $s_{C}$) is a section of $\cal{O}_{\cal{X}}(E)$ (resp. $\cal{O}_{\cal{X}}(C)$) vanishing precisely on $E$ (resp. $C$), and $\tilde{\sigma}'$ (resp. $\tilde{\omega}'$) are global sections of 
 
$$M:=\cal{L}(-E)|_{X_0}\cong\cal{L}(C)|_{X_0}$$
$$(\text{resp}\ N:=\omega_{\cal{X}/\Delta}(-C)|_{X_0}\cong\omega_{\cal{X}/\Delta}(E)|_{X_0})$$
 
 Notice that tensoring $\cal{L}$ by $\cal{O}_{\cal{X}}(-E)$ will increase the degree by $1$ on the $E$ component and decrease the degree by $1$ on the $C$ component. The line bundles $M$ and $N$ are described by the figure below. Notice that $M\otimes N\cong \omega_{X_0}\otimes L_0$.
 
 \vspace{.5cm}

\begin{figure}[h]
\psfrag{L'(-u)}{ $L'(-u)$}
\psfrag{O(v+u)}{$\cal{O}_E(u+v)$}
\psfrag{KC(2u)}{$K_C(2u)$}
\psfrag{KE}{$K_E$}
\psfrag{M}{$M$}
\psfrag{N}{$N$}

   \includegraphics[scale=0.6]{twist.eps}
  \caption{The twisted line bundles on the central fiber }
  \label{figure2}
\end{figure}

\vspace{2cm}

 By the construction of the derivative complex,
 
  \begin{eqnarray}\label{baby case}
D(\mu_t)|_{t=0}(\sigma\otimes\omega)&=&
\frac{\tilde{\sigma}\cdot\tilde{\omega}}{t}|_{X_0}\nonumber\\&=&\frac{(\tilde{\sigma}'s_E)\cdot(\tilde{\omega}'s_C)}{t}|_{X_0}
\nonumber\\&=&(\tilde{\sigma}'\tilde{\omega}')|_{X_0}\ \text{mod} \ \Im{\mu_0}
\end{eqnarray}

The general case is just notationally more complicated.

 Let
$$\sigma_{i_1}\wedge...\wedge\sigma_{i_{r-p}}\otimes\omega\in\bigwedge^{r-p}V\otimes H^0(E,K_{E}),$$
we will compute its image under $D(\delta_t)|_{t=0}$. 
Similar to the baby case, we have to lift $\sigma_{i_1}\wedge...\wedge\sigma_{i_{r-p}}\otimes\omega$ to first order, apply the Koszul differential $\delta_t$ to the lifting, then restrict the outcome divided by $t$ to $X_0$.

To this end,  write 
 $\tilde{\sigma}_{i_k}$, $\tilde{\omega}$ be sections of $\cal{L}$ and $\omega_{\cal{X}/\Delta}$ extending $\sigma_{i_k}$ and $\omega$ respectively. 
  Since $\tilde{\sigma}_{i_k}$ vanishes on $E$ and $\tilde{\omega}$ vanishes on $C$, we can write 
\begin{eqnarray}\label{4.1}\tilde{\sigma}_{i_k}=\tilde{\sigma}'_{i_k}s_{E}
\end{eqnarray}
and 
\begin{eqnarray}\label{4.2}\tilde{\omega}=\tilde{\omega}'s_{C},
\end{eqnarray}
 as before.

We compute

\begin{eqnarray}\label{general}
D(\delta_t)|_{t=0}(\sigma_{i_1}\wedge...\wedge\sigma_{i_{r-p}}\otimes\omega)&=&
\frac{\delta(\tilde{\sigma}_{i_1}\wedge...\wedge\tilde{\sigma}_{i_{r-p}}\otimes\tilde{\omega})}{t}|_{X_0}\nonumber\\&=&\sum_{k=1}^{r-p}(-1)^k\frac{\tilde{\sigma}_{i_1}\wedge...\wedge\widehat{\tilde\sigma}_{i_k}\wedge...\wedge\tilde\sigma_{i_{r-p}}\otimes(\tilde{\sigma}'_{i_k}s_{E})(\tilde{\omega}'s_{C})}{t}|_{X_0}
\nonumber\\&=&\sum_{k=1}^{r-p}(-1)^k\sigma_{i_1}\wedge...\wedge\widehat{\sigma}_{i_k}\wedge...\wedge\sigma_{i_{r-p}}\otimes(\tilde{\sigma}'_{i_k}\tilde{\omega}')|_{X_0}\ \text{mod} \ \Im{\delta_0}
\end{eqnarray}

\section{The study of obstruction classes}

As explained in the introduction, our goal is to show that the rank of the obstruction map
$$\xymatrix{K_{r-p,0}(X_0,L_0;\omega_{X_0})\ar[r]^-{D(\delta_t)|_{t=0}}&K_{r-p-1,1}(X_0,L_0;\omega_{X_0}).}$$ is as big as it could be as this would imply $K_{r-p,0}(X_t,L_t;K_{X_t})$ is as small as it could be for $t\ne0$. 

Again let us analyze the baby case $p=r-1$ first. By a result of \cite{AS}, the multiplication map
 $$\xymatrix{H^0(X_t,L_t)\otimes H^0(X_t,\omega_{X_t})\ar[r]^-{\mu_t}&H^0(X_t,\omega_{X_t}\otimes L_t)}.$$
 is already surjective for the general fiber, which implies 
 $K_{1,0}(X_t,L_t;K_{X_t})$ is of expected dimension.  So we are not proving anything new here. But it is helpful to redo this case via infinitesimal methods, because the later has the potential to generalize.
 
First notice that 
 $$\xymatrix{H^0(X_0,L_0)\otimes H^0(X_0,\omega_{X_0})\ar[r]^-{\mu_0}&H^0(X_0,\omega_{X_0}\otimes L_0)}.$$
 is not surjective. The problem is that any section in $H^0(X_0,\omega_{X_0})$ vanishes at $u$, but there is a section in $H^0(X_0,\omega_{X_0}\otimes L_0)$ not vanishing at $u$. Moreover, $\mu_0$ is exactly corank one. This is because on the $E$ component, $\mu_0$ becomes
 $$\xymatrix{H^0(\cal{O}_E(v))\otimes H^0(\cal{O}_E(u))\ar[r]&H^0(\cal{O}_E(u+v))},$$
 which is of corank 1 (c.f. figure \ref{figure1}). Since by \cite{AS} (or by induction hypothesis if one wants a proof independent of \cite{AS}), the map
 $$H^0(C,L')\otimes H^0(K_C)\longrightarrow H^0(L'\otimes K_C)$$
 is surjective, we see that if a section  $\tau\in H^0(X_0, \omega_{X_0}\otimes L_0)$ vanishes at $u$, then it is in the image of $\mu_0$. Therefore $k_{1,0}(X_t,L_t;K_{X_t})$ jumps up by one at $t=0$.
 
 Now by the computation of the obstruction class in (\ref{baby case}),
 $$(\tilde{\sigma}'\tilde{\omega}')|_{X_0}$$ 
 is in the image of 
 $$H^0(X_0,M)\otimes H^0(X_0,N)\longrightarrow H^0(X_0,\omega_{X_0}\otimes L_0)$$
 Since there are always sections in $H^0(X_0,M)$ and $H^0(X_0,N)$ not vanishing at $u$, we can easily choose $\tilde{\sigma}'$ and $\tilde{\omega}'$ such that $(\tilde{\sigma}'\tilde{\omega}')|_{X_0}$ does not vanish at $u$ (Notice that any (global) sections of $M$ and $N$ will extend to nearby fiber.) Therefore there is at least one dimensional subspace of $K_{1,0}(X_0,L_0;K_{X_0})$ does not deform to nearby fiber, namely $(\tilde{\sigma}'s_E)(\tilde{\omega}'s_C)$. This means $K_{1,0}(X_t,L_t;K_{X_t})$ is of expected dimension for $t\ne0$. This proves the baby case.
 
The cases for general $p$ is much more delicate. 
 There are two possible ways to show the obstruction classes in (\ref{general}) is not in the image of $\delta_0$.

The easier way is to mimic the baby case to show $(\tilde{\sigma}'_{i_k}\tilde{\omega}')|_{X_0}$ does not lie in the image of
$$\xymatrix{H^0(X_0,L_0)\otimes H^0(X_0,\omega_{X_0})\ar[r]^-{\mu_0}&H^0(X_0,\omega_{X_0}\otimes L_0)}.$$
(As we have seen before, $\mu_0$ is of corank one).
This will be the case if  $(\tilde{\sigma}'_{i_k}\tilde{\omega}')|_{X_0}$ does not vanish at $u$. Then the obstruction class in (\ref{general}) has no chance to be in $\Im(\delta_0)$.


To make this idea more precise, choose a basis $\{\sigma_1,...,\sigma_r\}$ of $V$ adapted to $u$ i.e. $\sigma_k|_C$ vanishes to order exactly $k$ along $u$ (therefore $\sigma_k|_E=0$ for $k\ge1$).  Use the same notation as (\ref{4.1}) and (\ref{4.2}), we have
$$(\tilde\sigma'_1\tilde\omega')|_{X_0}$$
is not in the image of $\mu_0$ because $\sigma_1|_C$ vanishes to order exactly $1$ at $u$, any extension $\tilde{\sigma}_1=\tilde{\sigma}_1'\cdot s_E$ we choose would have $\tilde{\sigma}_1'$ does not vanish at $u$ (Because $s_E|_C$ vanishes to order $1$ at $u$, so $\tilde{\sigma}_1'$ does not vanish.). Similarly the extension $\tilde{\omega}'$ does not vanish at $u$. (Although the choice of extensions is not unique, different choices give the same obstruction class modulo $\Im(\delta_0)$.)

 Howerever for $k\ge 2$, because $\sigma_k|_C$ vanishes to order at least $2$ at $u$, we could choose suitable extension $\tilde\sigma_k$ (modulo $Im(\delta_0)$ this does not depend on the choice of extension) such that $\tilde\sigma_k=\tilde\sigma_k''s_E^2$ and therefore
$$(\tilde\sigma_k'\tilde\omega')|_{E}=(\tilde\sigma_k''s_E\tilde\omega')|_E=0.$$

Thus for any ${\bf{1=i_1}}< i_2<i_3<...<i_{r-p}\le r$,
$$D(\delta_t)|_{t=0}({\bf\sigma_{1}}\wedge\sigma_{i_2}\wedge...\wedge\sigma_{i_{r-p}}\otimes\omega)=-\sigma_{i_2}\wedge...\wedge\sigma_{i_{r-p}}\otimes(\tilde\sigma'_1\tilde\omega')|_{X_0}+\sum_{k=2}^{r-p}(-1)^k\sigma_{i_1}\wedge...\wedge\widehat{\sigma}_{i_k}\wedge...\wedge\sigma_{i_{r-p}}\otimes(\tilde{\sigma}'_{i_k}\tilde{\omega}')|_{X_0},$$
by looking at its restriction to $E$, we see immediately that 
$$\{D(\delta_t)|_{t=0}({\bf\sigma_{1}}\wedge\sigma_{i_2}\wedge...\wedge\sigma_{i_{r-p}}\otimes\omega)|\ 2\le i_2<i_3<...<i_{r-p}\le r\}$$
 are linearly independent in $K_{r-p-1,1}(X_0,L_0;\omega_{X_0})$.

Thus at this point the rank of $D(\delta_t)|_{t=0}$ is at least 
$${r-1\choose p},$$
and therefore 

\begin{eqnarray}\label{atleast}k_{r-p,0}(X_t,L_t;\omega_{X_t})\le {r\choose p}-{r-1\choose p}={r-1\choose p-1}
\end{eqnarray}
for $t\ne0$.

The second way to show obstructions are non-trivial is more delicate. As we have already seen, for $2\le i_1<...<i_{r-p}\le r$, restricting to $E$ does not give any information to $D(\delta_t)|_{t=0}(\sigma_{i_1}\wedge...\wedge\sigma_{i_{r-p}}\otimes\omega)$ since they all restrict to zero on $E$. We will have to study the restriction of 
$D(\delta_t)|_{t=0}(\sigma_{i_1}\wedge...\wedge\sigma_{i_{r-p}}\otimes\omega)$  to $C$.

 (\ref{general}) restricted to $C$ becomes
\begin{eqnarray}\label{4.4}
D(\delta_t)|_{t=0}(\sigma_{i_1}\wedge...\wedge\sigma_{i_{r-p}}\otimes\omega)|_C&=&\sum_{k=1}^{r-p}(-1)^k\sigma_{i_1}\wedge...\wedge\widehat{\sigma}_{i_k}\wedge...\wedge\sigma_{i_{r-p}}\otimes(\tilde{\sigma}'_{i_k}\tilde{\omega}')|_{C}\ \text{mod} \ \Im{\delta_0}\nonumber
\end{eqnarray}

Here $\tilde{\sigma}_{i_k}'|_C\in H^0(C,L'(-u))$ and is equal to $\sigma_{i_k}$ for $i_k\ge1$ if we abuse the notation by thinking of $\sigma_{i_k}$ as sections of $L'(-u)$ instead of $L'$. (Thus $\sigma_1$ is a section of $L'(-u)$ which does not vanish at $u$ and $\sigma_2$ vanishes to order $1$ at $u$, etc.) On the other hand, $\tilde{\omega}'|_C\in H^0(K_C(2u))$ and does not vanish at $u$, denote it by $\omega'$. With the notation above, the obstruction class becomes
\begin{eqnarray}\label{4.4}
\sum_{k=1}^{r-p}(-1)^k\sigma_{i_1}\wedge...\wedge\widehat{\sigma}_{i_k}\wedge...\wedge\sigma_{i_{r-p}}\otimes(\sigma_{i_k}\omega')=\delta_0(\sigma_{i_1}\wedge...\wedge\sigma_{i_{r-p}}\otimes\omega').
\end{eqnarray}

\begin{remark} Here we are still using $\delta_0$ to denote the restriction to $C$ of the original Koszul differential $\delta_0$ on $X_0$. Equality (\ref{4.4}) does not mean $D(\delta_t)|_{t=0}(\sigma_{i_1}\wedge...\wedge\sigma_{i_{r-p}}\otimes\omega)|_C\in \Im(\delta_0)$, since $\omega'\notin H^0(K_C)$.

\end{remark}

Now the non-trivialness of obstruction classes on $X_0$ boils down to a question on $(C',L')$. This is 
\begin{theorem}\label{hypothesis}
Let $C$ be a general curve of genus $g$, $L'$ be a $g^r_d$ on $C$ such that $K_{r-p,0}(C,L';K_C)=0$ and $\{\sigma_0,...,\sigma_r\}$ is a basis of $H^0(C,L')$ adapted to a  general point $u\in C$, and $\omega'\in H^0(C,K_C(2u))\smallsetminus H^0(C,K_C)$. 
Consider the obstruction classes
\begin{eqnarray}\label{obclass}\{\delta_0(\sigma_{i_1}\wedge...\wedge\sigma_{i_{r-p}}\otimes\omega')\ |\ 2\le i_1<...<i_{r-p}\le r\}\subset K_{r-p-1,1}(C,L';K_C).
\end{eqnarray}

\begin{enumerate}
\item If these classes
  are linearly independent in $K_{r-p-1,1}(C,L',K_C)$, then 
$$K_{r-p,0}(X_t,L_t;\omega_{X_t})\cong K_{p-1,2}(X_t,L_t)^{\lor}=0.$$ 
 
 \item On the other hand, if these classes span $K_{r-p-1,1}(C,L';K_C)$, 
then 
$$k_{r-p,0}(X_t,L_t;\omega_{X_t})\le {r-1\choose p-1}-k_{r-p-1,1}(C,L';K_C)=-b_{p+1}(X_t,L_t).$$
which implies $K_{p,1}(X_t,L_t)=0$.
\end{enumerate}
\end{theorem}
\begin{proof}  The hypothesis in case (a) implies that $D(\delta_t)|_{t=0}$ in (\ref{derivative}) is either injective , which means no elements in $K_{r-p,0}(X_0,L_0;\omega_{X_0})$ will extend to nearby. In case (b), the rank of  $D(\delta_t)|_{t=0}$ is 
$$k_{r-p-1,1}(C,L';K_C)+{r-1\choose p},$$
which implies that
$$k_{r-p,0}(X_t,L_t;\omega_{X_t})\le {r-1\choose p-1}-k_{r-p-1,1}(C,L';K_C)=-b_{p+1}(X_t,L_t).$$ 
 Therefore only a subspace of $K_{r-p,0}(X_0,L_0;\omega_{X_0})$ of correct dimension will extend to nearby fibers to first order.
\end{proof}

Now we give a sufficient condition for the obstruction classes in (\ref{obclass}) to be linearly independent. Consider the diagram of complexes
\begin{eqnarray}\label{canonical}\nonumber
\xymatrix{\wedge^{r-p+1}H^0(L')\otimes H^0(K_C\otimes L'^{-1})\ar@{^{(}->}[r]\ar@{=}[d]&\wedge^{r-p}H^0(L')\otimes H^0(K_C)\ar[r]^-{\delta_0}\ar@{^{(}->}[d]^-{\alpha}&\wedge^{r-p-1}H^0(L')\otimes H^0(K_C\otimes L')\ar[d]\\
\wedge^{r-p+1}H^0(L')\otimes H^0(K_C\otimes L'^{-1})\ar@{^{(}->}[r]&\wedge^{r-p}H^0(L')\otimes H^0(K_C(2u)))\ar[r]^-{\delta}&\wedge^{r-p-1}H^0(L')\otimes H^0(K_C\otimes L'(2u))}
\end{eqnarray}

\begin{lemma}\label{lemma4.2}
Notation and assumption same as Theorem \ref{hypothesis}, if the bottom row of the above diagram is exact in the middle (the first row is exact by assumption of Theorem \ref{hypothesis}), then the obstruction classes in (\ref{obclass})
are linearly independent modulo $\Im(\delta_0)$. As a consequence of Theorem \ref{hypothesis}, $K_{r-p,0}(X_t,L_t;\omega_{X_t})=0$.

\end{lemma}

\begin{proof}
The assumption implies that $\Ker(\delta)=\Ker(\delta_0)\cong \wedge^{r-p+1}H^0(L')\otimes H^0(K_C\otimes L'^{-1})$. If a linear combination of the obstruction classes $\delta_0(\sigma_{i_1}\wedge...\wedge\sigma_{i_{r-p}}\otimes\omega')$ is equal to $\delta_0(c)$ for some $c\in \wedge^{r-p}H^0(L')\otimes H^0(K_C)$, then the same linear combination of the $\{\sigma_{i_1}\wedge...\wedge\sigma_{i_{r-p}}\otimes\omega'\}$ minus $\alpha(c)$ is in $\Ker(\delta)=\Ker(\delta_0)$. This contradicts with the fact that 
$$\{\sigma_{i_1}\wedge...\wedge\sigma_{i_{r-p}}\otimes\omega'\}$$
are linearly independent in $\wedge^{r-p}H^0(L')\otimes H^0(K_C(2u))$ modulo image of $\alpha$. 

\end{proof}

To end this section, we give a proof of Theorem \ref{generalcase}.
\begin{proof} of ${\bf Theorem \ref{generalcase}}.$) There are two cases
 \begin{enumerate}
\item
 $K_{p,1}(C,L')=0$.  By Proposition \ref{prop3.1}, $K_{p,1}(X_0,L_0)=0$ and ${\bf GV}(p)^r_{g+1,d+1}$ follows from upper-semicontinuity of Koszul cohomology. 
\item $K_{p-1,2}(C,L')\cong K_{r-p,0}(C,L,;K_C)^{\lor}=0$.

Starting from the defining sequence for the kernel bundle $M_{L'}$:
$$\xymatrix{0\ar[r]&M_{L'}\ar[r]&H^0(L')\otimes\cal{O}_C\ar[r]&L'\ar[r]&0},$$
taking $(r-p)$-th wedge, twisting by $K_C$ (resp. $K_C(2u)$) and then taking global sections, we get  
$$\xymatrix{0\ar[r]&\wedge^{r-p}M_{L'}\otimes K_C\ar[r]&\wedge^{r-p}H^0(L')\otimes H^0(K_C)\ar[r]^-{\delta_0}&\wedge^{r-p-1}H^0(L')\otimes H^0(K_C\otimes L')\ar[r]&0}$$
and therefore
\begin{eqnarray}\label{ker0}\Ker(\delta_0)=H^0(C,\wedge^{r-p}M_{L'}\otimes K_C).
\end{eqnarray}
Similarly,
\begin{eqnarray}\label{ker}\Ker(\delta)=H^0(C,\wedge^{r-p}M_{L'}\otimes K_C(2u)).
\end{eqnarray}
If
$$h^0(C,\wedge^{r-p}M_{L'}\otimes K_C)=h^0(C,\wedge^{r-p}M_{L'}\otimes K_C(2u)),$$
we conclude that 
$$\Ker(\delta_0)\cong\Ker(\delta)$$ which implies $K_{r-p,0}(X_t,L_t;\omega_{X_t})\cong K_{p,2}(X_t,L_t)^{\lor}=0$ by Lemma \ref{lemma4.2}.

\end{enumerate}
 \end{proof}

\section{Some Applications to the Maximal Rank Conjecture}
 
    
    In the case $p=1$, we can reduce condition (\ref{h0condition}) in Theorem \ref{generalcase} to a statement about the tangential variety $TC$ of $C$, namely, the existence of  a quadric containing $C$ but not containing $TC$. The condition on the tangential variety is quite interesting in its own right. Theorem \ref{mainresult} follows immediately from Theorem \ref{generalcase} and Lemma \ref{lemma5.1}.




\begin{lemma}\label{lemma5.1} For a general $L'=g^r_d$ on a general curve $C$ of genus $g$ with $K_{0,2}(C,L')=0$, (i.e. $\mu$ in (\ref{multimap}) is surjective), if there exists a quadric $Q\subset\bb{P}^r$ containing $\phi_{|L'|}(C)$ but not containing its tangential surface $TC:=\cup_{u\in C}T_uC\subset\bb{P}^r$,
then 
$$H^0(C,\wedge^{r-1}M_{L'}\otimes K_C)=H^0(C,\wedge^{r-1}M_{L'}\otimes K_C(2u)).$$

\end{lemma}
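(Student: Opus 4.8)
The plan is to translate the asserted equality into a surjectivity statement for a jet map on $M_{L'}\otimes L'$, and then to exhibit enough explicit sections, using the quadric $Q$ to cover the one direction the antisymmetric tensors miss. First I would dualize. Since $M_{L'}$ has rank $r$ and $\det M_{L'}\cong L'^{-1}$, we have $\wedge^{r-1}M_{L'}\cong M_{L'}^{\lor}\otimes L'^{-1}$, so Serre duality gives $H^0(\wedge^{r-1}M_{L'}\otimes K_C)^{\lor}\cong H^1(M_{L'}\otimes L')$ and $H^0(\wedge^{r-1}M_{L'}\otimes K_C(2u))^{\lor}\cong H^1(M_{L'}\otimes L'(-2u))$. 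Under these identifications the inclusion $K_C\hookrightarrow K_C(2u)$ dualizes to the natural map $H^1(M_{L'}\otimes L'(-2u))\to H^1(M_{L'}\otimes L')$, and the long exact sequence attached to $0\to M_{L'}\otimes L'(-2u)\to M_{L'}\otimes L'\to(M_{L'}\otimes L')|_{2u}\to0$ shows that the original inclusion is an isomorphism if and only if the evaluation map $ev_{2u}\colon H^0(M_{L'}\otimes L')\to(M_{L'}\otimes L')|_{2u}$ onto the fibre over the length-two scheme $2u$ is surjective. This surjectivity is the statement I would actually prove.

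Next I would use the decomposition $H^0(M_{L'}\otimes L')=\Ker(H^0(L')\otimes H^0(L')\to H^0(L'^2))=\wedge^2H^0(L')\oplus K_{1,1}(C,L')$, coming from $H^0(L')\otimes H^0(L')=S^2H^0(L')\oplus\wedge^2H^0(L')$ together with the fact that $\mu$ kills $\wedge^2H^0(L')$ and restricts on $S^2H^0(L')$ to a map with kernel the space of quadrics $K_{1,1}(C,L')$. Working in a local coordinate $t$ at $u$ with a trivialisation $\ell$ of $L'$, writing $\sigma_k=s_k(t)\ell$ with $s_k$ of order exactly $k$, I would introduce the local frame $e_k=\sigma_k-(s_k/s_0)\sigma_0$ for $k=1,\dots,r$ of $M_{L'}$, so that $(M_{L'}\otimes L')|_{2u}$ has as basis the values and first derivatives of the $e_k\otimes\ell$. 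A direct computation then gives $\sigma_0\wedge\sigma_k=-s_0\,e_k\otimes\ell$ and $\sigma_1\wedge\sigma_j=(s_j e_1-s_1 e_j)\otimes\ell$. Hence the antisymmetric sections $\sigma_0\wedge\sigma_k$ ($k=1,\dots,r$) already surject onto the value fibre $(M_{L'}\otimes L')|_u$, while the $\sigma_1\wedge\sigma_j$ ($j=2,\dots,r$) supply the first-order jets in the directions $e_2,\dots,e_r$. The only jet direction not reached by antisymmetric tensors is $e_1\otimes\ell$.

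The heart of the argument, and the step I expect to be the main obstacle, is to produce that last direction from the quadric. Writing $Q=\sum_{i,j}a_{ij}\sigma_i\otimes\sigma_j$ with $a_{ij}$ symmetric, the relation $\sum_{i,j}a_{ij}s_is_j\equiv0$, which is precisely the condition $Q\supset\phi_{|L'|}(C)$, forces the $\sigma_0$-component of $Q$, viewed in $M_{L'}\otimes L'$, to vanish, so that $Q=\big(\sum_{i\ge1}(\sum_j a_{ij}s_j)e_i\big)\otimes\ell$. Correcting $Q$ by a combination of the $\sigma_0\wedge\sigma_i$ to make it vanish at $u$, and then by the $\sigma_1\wedge\sigma_j$ to kill its $e_j$-jets for $j\ge2$, one computes that the resulting section has first-order jet equal to $a_{11}\,e_1\otimes\ell$. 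A direct local calculation identifies $a_{11}$, up to a nonzero scalar, with the restriction of $Q$ to the tangent line $T_uC$, so that $a_{11}=0$ exactly when $T_uC\subset Q$.

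Finally I would invoke the hypothesis. Since $Q$ does not contain the tangential surface $TC$, the locus of points $u\in C$ with $T_uC\subset Q$ is a proper closed subset, so for the general point $u$ used in the degeneration we have $a_{11}\neq0$. The corrected quadric then supplies the missing jet $e_1\otimes\ell$, which together with the antisymmetric contributions shows that $ev_{2u}$ is surjective; by the reduction in the first paragraph this yields the desired equality $H^0(C,\wedge^{r-1}M_{L'}\otimes K_C)=H^0(C,\wedge^{r-1}M_{L'}\otimes K_C(2u))$. The delicate points to get right are the two bookkeeping computations of the jets in the frame $\{e_k\otimes\ell\}$ and the verification that the tangential-variety hypothesis is exactly the non-vanishing of $a_{11}$ at a general $u$.
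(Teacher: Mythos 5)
Your proof is correct and follows essentially the same route as the paper's: both reduce the lemma to the statement that twisting $M_{L'}\otimes L'$ by $-2u$ drops $h^0$ by exactly $2r$ (you phrase this as surjectivity of the jet evaluation $H^0(M_{L'}\otimes L')\to (M_{L'}\otimes L')|_{2u}$, the paper phrases it via Riemann--Roch), both use the splitting $\Ker(\mu)=\wedge^2H^0(L')\oplus K_{1,1}(C,L')$, and both extract the single missing dimension from the hypothesis that $Q$ misses $T_uC$ at a general $u$. The only difference is bookkeeping: the paper bounds $\dim_{\bb{C}}\Ker(\mu')$ by an abstract dimension count inside $S^2H^0(L')$ (its condition $Q\notin\overline{H}_u$ is precisely your condition $a_{11}\ne0$, since $\overline{H}_u$ consists exactly of the quadrics containing $T_uC$), whereas you exhibit explicit spanning jets in a local frame, which makes the same count constructive.
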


\begin{proof} Notice that
$$\wedge^{r}M_{L'}\cong L'^{-1}$$
and therefore
$$\wedge^{r-1}M^{\lor}_{L'}\cong M_{L'}\otimes L'.$$By Riemann-Roch, it suffices to show that
$$h^0(M_{L'}\otimes L'(-2u))= h^0(M_{L'}\otimes L')-2r.$$
The $\ge$ part is automatically true, only the $\le$ part needs to be proved.

We have diagram with exact rows
$$\xymatrix{0\ar[r]&H^0(M_{L'}\otimes L'(-2u))\ar[r]\ar@{^{(}->}[d]&H^0(L')\otimes H^0(L'(-2u))\ar[r]^-{\mu'}\ar@{^{(}->}[d]&H^0(L'^2(-2u))\ar@{^{(}->}[d]&\\
0\ar[r]&H^0(M_{L'}\otimes L')\ar[r]&H^0(L')\otimes H^0(L')\ar[r]^-{\mu}&H^0(L'^2)\ar[r]&0}$$
We need to show
$$\dim_{\bb{C}}\Ker(\mu')\le\dim_{\bb{C}}\Ker(\mu)-2r.$$
Let $H_u:=H^0(L')\otimes H^0(L'(-2u))$ and $\overline{H}_u$ be its image in 
$$\frac{H^0(L')\otimes H^0(L')}{\wedge^2H^0(L')}\cong S^2H^0(L').$$
$\overline{H}_u$ is the space of quadrics which contain the tangent line of $C$ at $u$.

We have 
$$\Ker(\mu')=\Ker(\mu)\cap H_u.$$
By hypothesis, $\overline{\Ker(\mu)}\nsubseteq \overline{H}_u$ for general $u$ (since $Q\notin \overline{H}_u$), then it follows that

$$\dim_{\bb{C}}(\overline{\Ker(\mu')})=\dim_{\bb{C}}(\overline{\Ker(\mu)\cap H_u})\le\dim_{\bb{C}}(\overline{\Ker(\mu)}\cap\overline{H}_u)\le\dim_{\bb{C}}(\overline{\Ker(\mu)})-1=:m-1.$$
Thus
\begin{eqnarray}
\dim_{\bb{C}}(\Ker(\mu'))&\le& m-1+\dim_{\bb{C}}(\wedge^2 H^0(L')\cap H_u)\nonumber\\&=&m-1+\dim_{\bb{C}}(\wedge^2H^0(L'(-2u)))\nonumber\\&=&m-1+{r-1\choose2}=m+{r+1\choose2}-2r\nonumber\\&=&\dim_{\bb{C}}(\Ker(\mu))-2r.\nonumber
\end{eqnarray}
\end{proof}

Now let us go the the proof of Corollary \ref{bestresult}. The numerical assumption in Corollary \ref{bestresult} turns out to be a technique assumption needed to verify assumption in Theorem \ref{mainresult} (b) about $TC$. This is equivalent to the numerical assumption in Lemma \ref{lemma5.2}.
 By appendix, if $L'$ is a general non-special $g^r_{2r-3}$ on a general curve $C$ of genus $r-3$, the number of quadrics containing $TC$ is at most
$${r-4\choose2}.$$

\begin{lemma}\label{lemma5.2} Let $C\subset\bb{P}^r$ be a general curve of genus $g$ embedded by a general $g^r_d$ $L'$ with $h^1(L')\le1$. Suppose
$${r+2\choose2}-(2d-g+1)>{r-4\choose2},$$
(i.e. the number of independent quadrics containing $C$ is at least ${r-4\choose2}$.)

 then there exists a quadric $Q$ on $\bb{P}^r$ containing $C$ but not containing $TC$. 
\end{lemma}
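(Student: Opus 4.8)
The plan is to produce the quadric by a dimension count that compares the space of quadrics through $C$ with the subspace of those that in addition contain the tangential surface. Since $C\subset TC$, every quadric vanishing on $TC$ vanishes on $C=\phi_{|L'|}(C)$, so the quadrics through $TC$ form a linear subspace of the quadrics through $C$. Thus any quadric through $C$ that does not lie in this subspace is the $Q$ we seek, and it is enough to establish the strict inequality
$$\dim\{\,\text{quadrics}\supset C\,\}>\dim\{\,\text{quadrics}\supset TC\,\}.$$
Under the hypothesis this will follow from the two bounds $\dim\{\text{quadrics}\supset C\}\ge{r+2\choose2}-(2d-g+1)$ and $\dim\{\text{quadrics}\supset TC\}\le{r-4\choose2}$.

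For the first bound, the quadrics through $C$ are exactly $\Ker(\mu)=K_{1,1}(C,L')$ for $\mu\colon S^2H^0(L')\to H^0(L'^2)$. Because $h^1(L')\le1$ forces $\deg L'^2=2d>2g-2$, equation (\ref{cor}) gives $H^0(K_C\otimes L'^{-2})=0$, so $h^1(L'^2)=0$ and $h^0(L'^2)=2d-g+1$ by Riemann--Roch. Hence
$$\dim\Ker(\mu)\ \ge\ \dim S^2H^0(L')-h^0(L'^2)\ =\ {r+2\choose2}-(2d-g+1),$$
which is the first bound; this step is routine.

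For the second bound I would start from the appendix, which already gives at most ${r-4\choose2}$ quadrics through $TC$ in the base case of a general non-special $g^r_{2r-3}$ on a general curve of genus $r-3$, and then propagate it to arbitrary $(g,d)$ with $h^1\le1$ by upper semicontinuity. The dimension of the space of quadrics containing the tangential surface is upper semicontinuous in a family of linear series, so the general member of $\cal{G}^r_{g,d}$ attains the minimum of this count; it therefore suffices to exhibit one member of $\cal{G}^r_{g,d}$ whose tangential surface lies on at most ${r-4\choose2}$ quadrics. I would build it by the degeneration of section $3$: take $C_0$ to be the genus-$(r-3)$ appendix curve carrying the $g^r_{2r-3}$, and attach $g-(r-3)$ elliptic tails, keeping $h^0$ (hence the ambient $\bb{P}^r$) fixed. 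Each tail is mapped by a degree-$1$ bundle and so collapses to a point lying on $\phi(C_0)$; consequently the tangential surface of the nodal limit $X_0$ contains the appendix tangential surface $T\phi(C_0)$, and since enlarging the variety can only impose more conditions on quadrics we get $\dim\{\text{quadrics}\supset TX_0\}\le\dim\{\text{quadrics}\supset T\phi(C_0)\}\le{r-4\choose2}$. Semicontinuity then transfers this bound to the general $(C,L')$ of genus $g$.

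Combining the two bounds with the hypothesis ${r+2\choose2}-(2d-g+1)>{r-4\choose2}$ yields the strict inequality, and the lemma follows. I expect the second bound to be the main obstacle: one must verify that the degeneration stays in (the closure of) $\cal{G}^r_{g,d}$ and that the tangential surface of the nodal curve is computed by a flat family to which semicontinuity applies, and, in the case $h^1(L')=1$, one must supply a base configuration with $h^1=1$ --- the degree-$1$ elliptic tails of section $3$ preserve $h^1$ and so cannot raise it from the appendix's value $0$, so a separate (e.g.\ canonical) base case is needed there.
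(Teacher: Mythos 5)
Your outline --- reduce the lemma to the two dimension counts, obtain the count of quadrics through $C$ from $h^1(L'^2)=0$ plus Riemann--Roch, and bound the quadrics through $TC$ by degenerating so that the image in $\bb{P}^r$ becomes the appendix curve and then invoking semicontinuity --- is exactly the paper's strategy, and your treatment of the case $h^1(L')=0$, $g\ge r-3$ is sound: whether one contracts a chain of $g-(r-3)$ degree-one elliptic tails (your version) or a single component of genus $g-r+3$ carrying a $g^0_{g-r+3}$ (the paper's version) is immaterial, since in both cases the image of the nodal curve is the genus-$(r-3)$ curve of Proposition \ref{prop7.1}, and any quadric containing the flat limit of the tangential surfaces must contain its tangential surface. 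Two smaller points you would still need to patch: the hypothesis of the lemma also allows $h^1=0$ with $g<r-3$ (the induction in Theorem \ref{bestresult} starts at $g=0$), where your base curve does not exist --- there the paper degenerates instead to the rational normal curve, using the bound ${r-2\choose2}$ from \cite{E}, and the numerics still close; and the smoothability of these nodal pairs inside $\cal{G}^r_{g,d}$, which you correctly flag, must be checked via limit linear series.

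The genuine gap is the case $h^1(L')=1$, and the repair you sketch does not work. What the induction needs from a base configuration is not that it be a curve with $h^1=1$; it is an upper bound by ${r-4\choose2}$ on the quadrics containing its \emph{tangential} surface. For a general canonical curve of genus $r+1$ in $\bb{P}^r$ no such bound is available: the appendix is an explicit matrix computation for the non-special genus-$(r-3)$ projection of a rational normal curve, and neither it nor anything else in the paper controls quadrics through the tangential surface of a canonical curve, so "a separate canonical base case" would require a new (and nontrivial) computation. The paper's resolution is different, and it is the idea your elliptic-tail construction cannot reproduce: keep the embedded component $Y$ the same (the appendix curve when $g>2r-2$, the rational normal curve when $r+1\le g\le 2r-2$) and put the speciality on the \emph{contracted} component. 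Concretely, for $g>2r-2$ it takes $X_0=Y\cup Z$ with $g_Y=r-3$, $L_0'|_Y$ a general $g^r_{2r-3}$, $g_Z=g-r+3$, and $L_0'|_Z=g^0_{g-r+2}$, obtained from a general $g^r_{g+2}$ on $Z$ twisted by $\cal{O}_Z(-ru)$; since $\deg(L_0'|_Z)=g_Z-1$ and $h^0(L_0'|_Z)=1$, the component $Z$ has $h^1=1$ and forces $h^1=1$ for the whole limit series, while still collapsing to a point, so the image curve and hence the tangential bound are unchanged. Your degree-one elliptic tails are non-special, so no number of them can raise $h^1$ above $0$; the fix is to make the collapsed component special, not to change the curve being embedded.
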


\begin{proof} Degenerate $(C,L')$ to $(C_0,L'_0)$ where $C_0$ is a nodal curve with two smooth components $Y$ and $Z$ meeting at a general point $u$. According to the value of $h^1(L')$, there are two cases.
\begin{enumerate}
\item $h^1(L')=0.$ 

$L'=g^r_{g+r}$ for $g\ge0$. If $0\le g\le r-3$, take $g_Y=0$, $g_Z=g$, $L'_0|_Y=\cal{O}_{\bb{P}^1}(r)$ and $L'_0|_Z=g^0_g$ (One could easily show such $(C_0,L'_0)$ can deform to $(C,L')$). Since there are only
$${r-2\choose2}$$ 
quadrics containing the tangential variety of the rational normal curve in $\bb{P}^r$ (see appendix) and in this range of $g$, the number of quadrics containing $C$ is at least
$${r+2\choose2}-(2d-g+1)={r+2\choose2}-(g+2r+1)>{r-2\choose2},$$
we conclude that there exists a quadric containing the nearby fiber $C$ but not containing $TC$. 

If $g>r-3$, we take $g_Y=r-3$, $g_Z=g-r+3$, $L_0'|_Y=g^r_{2r-3}$ (a general one) and $L_0'|_Z=g^0_{g-r+3}$. By proposition \ref{prop7.1} in the appendix, the number of quadrics containing $TC$ for nearby $C$ is at most ${r-4\choose2}$. By the numerical hypothesis, we get our conclusion.

\item $h^1(L')=1$. 

The argument is similar as above except that we need to deal with $L'=g^r_{g+r-1}$ for $g\ge r+1$. Again, if $r+1\le g\le 2r-2$, we take $g_Y=0$, $Z=C$, $L_0'|_Y=\cal{O}_{\bb{P}^1}(r)$ and $L_0'|_Z=L'(-ru)=g^0_{g-1}$. 

If $g>2r-2$, take $g_Y=r-3$, $g_Z=g-r+3$, $L_0'|_Y=g^r_{2r-3}$, $L_0'|_Z=g^0_{g-r+2}$. Here $L_0'|_Z$ comes from a general $g^r_{g+2}$ on $Z$ twisted by $\cal{O}_Z(-ru)$. The rest of the argument is exactly the same as in case (a).
\end{enumerate}

\end{proof}

\begin{proof} of {\bf Corollary \ref{bestresult})}
 First notice that by Corollary \ref{cor2.2}, to show projective normality of a general pair, it suffices to show (\ref{multimap}) is surjective. We will fix $h^1$ and $r$ and do induction on $g$.
 
  For $h^1=0$ case, we start with the fact that rational normal curve is projectively normal, (i.e. $(MRC)^r_{0,r}$ holds). For $h^1=1$ case, we use the fact that general canonical curve is projectively normal (i.e. $(MRC)^r_{r+1,2r}$ holds). Now assuming $(MRC)^r_{g,d}$ holds, by Lemma \ref{lemma5.2}, as long as 
 \begin{eqnarray}\label{condition}{r+2\choose2}-(2d-g+1)>{r-4\choose2},
 \end{eqnarray} 
  Theorem \ref{mainresult} (b) is satisfied, which implies $(MRC)^r_{g+1,d+1}$ (which is equivalent to projective normality). Plug in $d=g+r-h^1$ to (\ref{condition}), we immediately get the bound on $d$ as in the statement of the Theorem.

\end{proof}

\begin{proof} of {\bf Corollary 1.8)} The case $r=1,2$ is trivial. The arguments for $r=3,4$ are completely similar, so we will only prove the case $r=4$. Again we do induction on $g$. First suppose we have proved $(MRC)$ for the base cases $g=5h^1$, $L=g^4_{4h^1+4}$. Then notice that for $r=4$, ${r-4\choose2}=0$ and therefore there is no quadric containing the tangential variety in Theorem \ref{mainresult} (b). Thus $(MRC)^4_{g,d}$ implies $(MRC)^4_{g+1,d+1}$. It remains to prove $(MRC)$ for the base cases. When $h^1\le1$ $(MRC)^4_{5h^1,4h^1+4}$ is clear. If $h^1\ge 2$, we need to show $\mu$ in (\ref{multimap}) is injective. For $h^1=2$, $(MRC)^4_{10,12}$ is well known and is proved in \cite{FP}. If $h^1\ge3$, we  we could degenerate again to $C_0=Y\cup Z$ with $g_Y=10$, $g_X=5h^1-10$, $L_0|_Y=g^4_{12}$, $L_0|_Z=g^0_{4h^1-8}=g^4_{4h^1-4}(-4u)$, again it is easy to check such $(C_0,L_0)$ is smoothable in $\cal{G}^4_{5h^1,4h^1+4}$ (c.f \cite{W1} corollary 6.1 for details). The injectivity of $\mu$ in this case follows from the same argument as in Proposition \ref{prop3.1}. 
\end{proof}

It was also proved in \cite{F} that for any integer $s\ge1$, $(MRC)^{2s}_{s(2s+1),2s(s+1)}$ holds. In this case, $\rho=0$ and $h^1=s$. Thus by Theorem \ref{mainresult} (a), we have

\begin{corollary} $(MRC)^{2s}_{s(2s+1)+k,2s(s+1)+k}$ holds for all $s\ge1$, $k\ge0$, i.e. $(MRC)$ holds if $r=2h^1$. 
\end{corollary}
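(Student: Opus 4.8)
The plan is to fix $s\ge1$, keep $r=2s$ and $h^1=s$ throughout, and induct on $k$, using the theorem of \cite{F} as the base case and theorem \ref{mainresult} (a) as the inductive engine. Write $g_k=s(2s+1)+k$ and $d_k=2s(s+1)+k$, so that the assertion to be proved is $(MRC)^{2s}_{g_k,d_k}$ for every $k\ge0$, the base case $k=0$ being exactly the statement $(MRC)^{2s}_{s(2s+1),2s(s+1)}$ established in \cite{F}.

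First I would check that the chain of pairs is Brill--Noether admissible and stays inside the regime $r=2h^1$. Since $r$ is fixed and $g$, $d$ both increase by one at each step, the difference $d_k-g_k=s$ is constant, so $h^1(L)=r-(d_k-g_k)=s$ for every $k$; in particular $r=2h^1$ as claimed. Likewise $\rho=g_k-(r+1)h^1=k\ge0$, so a general pair $(X,L)\in\cal{G}^{2s}_{g_k,d_k}$ exists at each stage.

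The crux is a Riemann--Roch dimension count that upgrades ``maximal rank'' to ``injectivity'' at every stage, so that hypothesis (a) of theorem \ref{mainresult} is genuinely available. Since $\deg L^2=2d_k>2g_k-2$ we have $h^1(L^2)=0$, whence $\dim H^0(X,L^2)=2d_k-g_k+1=(2s+1)(s+1)+k$, while $\dim S^2H^0(X,L)={2s+2\choose2}=(2s+1)(s+1)$. Thus the source and target of $\mu$ in (\ref{mmap}) satisfy
$$\dim H^0(X,L^2)-\dim S^2H^0(X,L)=k.$$
Consequently, for $k\ge1$ the source is strictly smaller than the target, so maximal rank of $\mu$ forces $\mu$ to be injective; and for $k=0$ the two dimensions agree, so maximal rank forces $\mu$ to be an isomorphism, hence again injective.

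With these ingredients the induction is immediate. At $k=0$, \cite{F} gives that $\mu$ has maximal rank, and by the dimension count $\mu$ is injective; theorem \ref{mainresult} (a) then yields $(MRC)^{2s}_{g_1,d_1}$. Inductively, assuming $(MRC)^{2s}_{g_k,d_k}$ for some $k\ge0$, the dimension count shows that the maximal-rank map $\mu$ is injective (the source being no larger than the target), so theorem \ref{mainresult} (a) produces $(MRC)^{2s}_{g_{k+1},d_{k+1}}$, completing the step. There is no essential difficulty beyond \cite{F} and theorem \ref{mainresult}; the only point requiring care is to confirm, via the dimension count, that at each stage we land in case (a) of theorem \ref{mainresult} rather than merely knowing maximal rank, and in particular that the equal-dimension base case $k=0$ yields an isomorphism and hence the injectivity needed to start the recursion.
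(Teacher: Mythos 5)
Your proof is correct and follows essentially the same route as the paper: the paper likewise takes Farkas's result $(MRC)^{2s}_{s(2s+1),2s(s+1)}$ (where $\rho=0$, $h^1=s$) as the base case and iterates theorem \ref{mainresult} (a). The only difference is that you make explicit the Riemann--Roch dimension count $\dim H^0(X,L^2)-\dim S^2H^0(X,L)=k\ge0$ showing that maximal rank forces injectivity of $\mu$ at every stage, a point the paper leaves implicit when it invokes case (a).
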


\section{Higher syzygies}
As we mentioned in the introduction,
the difficulty to generalize the inductive argument to higher syzygies is due to the lack of known cases to start the induction with and no analog of Theorem \ref{mainresult} for higher syzygies. Nevertheless, we collect some vanishing results we can obtain in this section. 

\begin{prop} \label{notsharp}For a general $g^r_d$ $L$ on a general curve $X$ with $g\ge r+1$, $K_{p,1}(X,L)=0$ for $p\ge\lfloor\frac{r+1}{2}\rfloor$. \end{prop}

\begin{proof} Start with the case $g_X=r+1$, $L'=K_X$, thanks to Voisin's solution to the generic Green conjecture, $K_{p,1}(X,K_X)=0$ for  $p\ge \lfloor\frac{r+1}{2}\rfloor$. If $g>r+1$, we degenerate to $X_0=Y\cup Z$ with $g_Y=r+1$, $g_Z=g-r-1$, $L_0|_Y=K_Y$, $L_0|_Z=g^0_{d-2r}=g^r_{d-r}(-ru).$ The statement then follows from the same argument as in proposition \ref{prop3.1}.  

\end{proof}

\begin{remark} Using the same degeneration as in proposition \ref{notsharp}, we also have
$$k_{p,1}(X,L)\le k_{p.1}(Y,K_Y)=[{r-1\choose p}-{r-1\choose p-1}]r+{r+1\choose p}-{r+1\choose p+1}$$
 for $1\le p< \lfloor\frac{r+1}{2}\rfloor$. 
 We will improve this bound using an infinitesimal argument.

\end{remark}

Even though we do not have an analog of Theorem \ref{mainresult}, when $g$ is not too big compared to $p$, the inductive argument still go through:  
\begin{lemma}\label{lemma6.2} Let $C$ be a general curve of genus $g$ and $L'$ be a $g^r_d$ with $h^1(L')=1$. If $p\le r-\lfloor\frac{g+1}{2}\rfloor$, then the sequence 
$$\xymatrix{\wedge^{r-p+1}H^0(L')\otimes H^0(K_C\otimes L'^{-1})\ar@{^{(}->}[r]&\wedge^{r-p}H^0(L')\otimes H^0(K_C(2u)))\ar[r]^-{\delta}&\wedge^{r-p-1}H^0(L')\otimes H^0(K_C\otimes L'(2u))}$$
is exact in the middle.
\end{lemma}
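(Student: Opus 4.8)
The plan is to run the kernel-bundle reduction used in the proof of Lemma \ref{lemma5.1}, now with the $(r-p)$-th exterior power in place of the $(r-1)$-st. Writing $M_{L'}$ for the kernel bundle and taking $\wedge^{r-p}$ of its defining sequence, twisting by $K_C$ (resp. $K_C(2u)$) and passing to global sections, one identifies $\Ker(\delta_0)=H^0(C,\wedge^{r-p}M_{L'}\otimes K_C)$ and $\Ker(\delta)=H^0(C,\wedge^{r-p}M_{L'}\otimes K_C(2u))$. By Gieseker--Petri (\ref{petri}) the left-hand Koszul map of the bottom row is injective, and since $h^0(K_C\otimes L'^{-1})=1$ its image has dimension $\binom{r+1}{p}$; hence middle-exactness is equivalent to the single numerical statement $h^0(\wedge^{r-p}M_{L'}\otimes K_C(2u))=\binom{r+1}{p}$. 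Because $h^0(\wedge^{r-p}M_{L'}\otimes K_C)=\binom{r+1}{p}+k_{r-p,0}(C,L';K_C)\ge\binom{r+1}{p}$ always, this target splits into two tasks: (i) the vanishing $K_{r-p,0}(C,L';K_C)=0$, and (ii) the equality $h^0(\wedge^{r-p}M_{L'}\otimes K_C)=h^0(\wedge^{r-p}M_{L'}\otimes K_C(2u))$.

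Task (i) is the genus-$g$ instance of the vanishing being proved by induction (Theorem \ref{badresult}(b)): since $p\le r-\lfloor\frac{g+1}{2}\rfloor\le r-\lfloor\frac{g}{2}\rfloor$ it falls in the inductive range, with base case Voisin's theorem (Theorem \ref{generic green}), just as in Proposition \ref{not sharp}. For task (ii) I would Serre-dualize, using $\wedge^{r-p}M_{L'}^{\lor}\cong\wedge^{p}M_{L'}\otimes L'$ (the higher-$p$ analogue of the identity $\wedge^{r-1}M_{L'}^{\lor}\cong M_{L'}\otimes L'$ recorded before Lemma \ref{lemma5.1}), to rewrite $h^0(\wedge^{r-p}M_{L'}\otimes K_C(ju))=h^1(\wedge^{p}M_{L'}\otimes L'(-ju))$. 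Thus (ii) becomes $h^1(\wedge^{p}M_{L'}\otimes L')=h^1(\wedge^{p}M_{L'}\otimes L'(-2u))$, and feeding the length-two skyscraper sequence at $u$ into cohomology shows this is equivalent to surjectivity of the $1$-jet evaluation of $\wedge^{p}M_{L'}\otimes L'$ at the general point $u$, i.e. to $\wedge^{p}M_{L'}\otimes L'$ separating the first-order neighbourhood of $u$.

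To establish this jet-spannedness in the stated range, I would degenerate $(C,L',u)$ to a nodal curve $C_0=Y\cup Z$ meeting at $u$, arranging $K$ on a genus-$(r+1)$ component so that Voisin's theorem controls the syzygy bundle there, exactly in the spirit of Proposition \ref{not sharp} and Lemma \ref{lemma5.2}, and then invoke upper-semicontinuity of $h^0$ to transport the bound $h^0(\wedge^{r-p}M_{L'}\otimes K_C(2u))\le\binom{r+1}{p}$ from the special fibre to the general pair. The numerical hypothesis $p\le r-\lfloor\frac{g+1}{2}\rfloor$ is precisely what forces the relevant $H^1$ of the higher syzygy bundle not to jump when one removes the general length-two divisor $2u$.

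The main obstacle is task (ii). Unlike the case $p=1$ of Lemma \ref{lemma5.1}, where a single multiplication-map diagram together with the quadric/tangential hypothesis sufficed, for $p\ge2$ one confronts a genuinely higher-rank bundle $\wedge^{p}M_{L'}\otimes L'$ whose positivity is only borderline, since its twist involves a single copy of $L'$ rather than $L'^{p}$. Proving general-point $1$-jet spannedness of this bundle---equivalently the stability of $h^1$ under twisting down by the general $2u$---is the delicate point, and it is exactly here that the range restriction $p\le r-\lfloor\frac{g+1}{2}\rfloor$ is indispensable.
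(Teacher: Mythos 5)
Your formal reductions are correct as far as they go: $\Ker(\delta_0)=H^0(\wedge^{r-p}M_{L'}\otimes K_C)$ and $\Ker(\delta)=H^0(\wedge^{r-p}M_{L'}\otimes K_C(2u))$, middle-exactness is indeed equivalent to (i) $K_{r-p,0}(C,L';K_C)=0$ together with (ii) $h^0(\wedge^{r-p}M_{L'}\otimes K_C)=h^0(\wedge^{r-p}M_{L'}\otimes K_C(2u))$, and Serre duality via $\wedge^{r-p}M_{L'}^{\lor}\cong\wedge^{p}M_{L'}\otimes L'$ translates (ii) into surjectivity of the $1$-jet evaluation of $\wedge^{p}M_{L'}\otimes L'$ at the general point $u$. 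But the proposal does not prove the lemma, for two reasons. First, your treatment of (i) cites Theorem \ref{badresult}(b); that theorem is proved (as Theorem \ref{notnew}) by an induction whose engine is precisely Lemma \ref{lemma6.2}, so as written the argument is circular. It could be repaired by a simultaneous induction on $g$ proving the vanishing and the lemma together, but you would have to set that up explicitly, and it changes the logical role of the lemma: in the paper it is an unconditional input to the induction, not a product of it. Second, and decisively, task (ii) --- which you yourself flag as ``the main obstacle'' --- is never established. Recasting the statement as $1$-jet spannedness of the rank-$\binom{r}{p}$ bundle $\wedge^{p}M_{L'}\otimes L'$ at a general point is a reformulation of the difficulty, not a resolution of it: the sketch never shows where the hypothesis $p\le r-\lfloor\frac{g+1}{2}\rfloor$ actually enters, and the proposed degeneration to $Y\cup Z$ ``meeting at $u$'' is already problematic, since $u$ must remain a general smooth point at which jets are evaluated, and since semicontinuity of $h^0$ for exterior powers of kernel bundles along such a nodal degeneration requires a flatness/limit argument you do not supply.

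The paper's proof supplies exactly the idea that is missing here, and it is quite different. Since $h^1(L')=1$, multiplication by the generator $t\in H^0(K_C\otimes L'^{-1})$ embeds $H^0(L')$ as a subspace $W\subset H^0(K_C)$. Let $C'$ be the image of $C$ under $|K_C(2u)|$: it is a cuspidal curve of arithmetic genus $g+1$ with $H^0(C,K_C(2u))\cong H^0(C',\omega_{C'})$, and the sequence of the lemma is identified with the Koszul complex computing $K_{r-p,1}(C',\omega_{C'};W)$, which injects into the full canonical Koszul group $K_{r-p,1}(C',\omega_{C'})$. The latter vanishes for $r-p\ge\lfloor\frac{g+1}{2}\rfloor$ by Green's conjecture for curves on K3 surfaces (Voisin, Aprodu--Farkas), applied by degenerating $C'$ to a cuspidal curve on a K3 surface; this is precisely where the numerical hypothesis does its work, as the generic Green bound in arithmetic genus $g+1$. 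Note that this route needs neither (i) as a separate input nor any jet computation, and that the hypothesis $h^1(L')=1$ is what makes the cuspidal-canonical identification possible, whereas in your proposal it is used only to count $h^0(K_C\otimes L'^{-1})=1$. Unless you can prove your jet-spannedness claim by an independent method of comparable strength, the proposal remains a restatement of the lemma rather than a proof of it.
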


\begin{proof} Let $t\in H^0(K_C\otimes L'^{-1})$ be a generator. Multiplication by $t$ gives an embedding 
$$\xymatrix{H^0(L')\ar[r]^-{\cdot t}&H^0(K_C)}.$$
Denote its image $W$. Let $C'$ be the image of $C$ under the map given by $|K_C(2u)|$. $C'$ is of arithematic genus $g+1$ and has a cusp. We can identify $H^0(C,K_C(2u))$ with $H^0(C',\omega_{C'})$, where $\omega_{C'}$ is the dualizing sheaf of $C'$. With the above notation,
we can identify the cohomology group in question with $K_{r-p,1}(C',\omega_{C'};W)\subset K_{r-p,1}(C',\omega_{C'})$. Since curves in $K3$ surfaces satisfies Green conjecture (c.f. \cite{V1}, \cite{V2}), by degenerating $C'$ to a cuspidal curve in $K3$ surface, we have
$$K_{r-p,1}(C',\omega_{C'})=0$$
for $r-p\ge \lfloor\frac{g+1}{2}\rfloor$.

\end{proof}
Again we start our induction with a general curve of genus $r+1$, and $L'=K_C$. For $p< \lfloor\frac{r+1}{2}\rfloor$, we have
$$K_{p-1,2}(C,K_C)^{\lor}\cong K_{r-p,1}(C,K_C)=0$$

Now we apply the construction in section $3$, by Lemma \ref{lemma4.2} and Lemma \ref{lemma6.2}, we get

\begin{prop}\label{notnew}For a general $g^r_d$ L on a general curve $X$ with $h^1(L)=1$, 
\begin{enumerate}
\item $k_{p-1,2}(X,L)=k_{r-p,0}(X,L;K_X)=0$ if $p\le r-\lfloor\frac{g}{2}\rfloor$.
\item $k_{p-1,2}(X,L)\le(g-2r+2p-1){r-1\choose p-1}$ if $p>r-\lfloor\frac{g}{2}\rfloor$.
\end{enumerate}

\end{prop}

\begin{proof}Again we start our induction on $g$ with a general curve of genus $r+1$, and $L'=K_C$.  We always have
$$K_{p-1,2}(C,K_C)^{\lor}\cong K_{r-p,1}(C,K_C)=0.$$
for $r-p\ge \lfloor\frac{r+1}{2}\rfloor$.

Now we apply the construction in section $3$. If $\lfloor\frac{g}{2}\rfloor\le r-p$, Lemma \ref{lemma4.2} and  \ref{lemma6.2} applies and we get (a).

When $\lfloor\frac{g}{2}\rfloor$ get passed $r-p$ (or equivalently $g>2r-2p+1$), we nevertheless have estimate (\ref{atleast}) for each attached elliptic tail. Thus the bounds in (b) follows.

\end{proof}
  Combining the results of Propositions \ref{notsharp} and \ref{notnew}, we get Theorem \ref{badresult}.
  
\begin{remark} For line bundles with $h^1=1$ the assumption $p\le r-\lfloor\frac{g}{2}\rfloor$ is equivalent to the condition that
$d\ge2g-2+p-\lfloor\frac{g-1}{2}\rfloor$. Thus Theorem \ref{notnew} is the generic version of the generalized Green-Lazarsfeld conjecure for special linear series (c.f \cite{GL1}). However, this generic version is known to follow from the generic Green conjecture (c.f \cite{AF} proposition 4.30). It seems to the author that the bound in (b) is new.

\end{remark}

\section{Appendix}
In this appendix, we prove the following statement, which is needed in the proof of Theorem \ref{bestresult}.

\begin{prop}\label{prop7.1} For a general curve $C$of genus $r-3$ embedded in $\bb{P}^r$ by a general $g^r_{2r-3}$, the number of quadrics containing $TC$ is at most
$${r-4\choose 2}.$$
\end{prop}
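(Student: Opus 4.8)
The plan is to reinterpret the quadrics through $TC$ cohomologically and reduce the statement to a rank computation for a second order multiplication map. Set $V=H^0(C,L')$ and let $\mathcal{P}^1(L')$ be the bundle of first order principal parts, sitting in $0\to K_C\otimes L'\to\mathcal{P}^1(L')\to L'\to0$. Since $L'$ embeds $C$, the jet evaluation $V\otimes\mathcal{O}_C\to\mathcal{P}^1(L')$ is surjective, and applying $\Sym^2$ gives a map $\mu_2\colon\Sym^2V\to H^0(C,\Sym^2\mathcal{P}^1(L'))$. A quadric $Q$ contains $T_uC$ for every $u$ precisely when its restriction to each projective tangent line vanishes, i.e. exactly when $\mu_2(Q)=0$; thus the quadrics through $TC$ form $\Ker(\mu_2)$. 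Composing $\mu_2$ with the quotient $\Sym^2\mathcal{P}^1(L')\to L'^2$ recovers the ordinary $\mu\colon\Sym^2V\to H^0(L'^2)$, whose kernel is the quadrics through $C$. The key observation is that for $Q\in\Ker(\mu)$ the image $\mu_2(Q)$ automatically has vanishing component in the middle graded piece $K_C\otimes L'^2$ — this component is the derivative along $C$ of the identically vanishing section $Q|_C$ — so $\mu_2(Q)$ lands in $H^0(K_C^{2}\otimes L'^{2})$ and equals the tangent evaluation $q(Q)\colon u\mapsto Q(\phi'(u))$, where $\phi=\phi_{|L'|}$. Hence
$$\{\text{quadrics through }TC\}=\Ker\big(q\colon\Ker(\mu)\to H^0(C,K_C^{2}\otimes L'^{2})\big),$$
so $\mathrm{rank}(\mu_2)=\mathrm{rank}(\mu)+\mathrm{rank}(q)$ and the number of such quadrics is $\binom{r+2}{2}-\mathrm{rank}(\mu)-\mathrm{rank}(q)$.

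Next I would turn the desired bound into two rank statements. With $g=r-3$ and $d=2r-3$ one gets $h^0(L'^2)=3r-2$ and $h^0(K_C^{2}\otimes L'^{2})=7r-18$, and since $\binom{r+2}{2}-\binom{r-4}{2}=3(2r-3)$, the inequality "quadrics through $TC\le\binom{r-4}{2}$" is equivalent to $\mathrm{rank}(\mu)+\mathrm{rank}(q)\ge3(2r-3)$. In particular it suffices to prove that $\mu$ is surjective — a maximal rank statement giving $\mathrm{rank}(\mu)=3r-2$ — together with $\mathrm{rank}(q)\ge3r-7$. As a sanity check the same mechanism for the rational normal curve gives $\Ker(\mu)$ of dimension $\binom{r}{2}$ and a surjective $q$ onto $H^0(\mathcal{O}_{\mathbb{P}^1}(2r-4))$, reproducing the $\binom{r-2}{2}$ quadrics through the tangent developable invoked in Lemma \ref{lemma5.2}.

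Finally, since $\dim\Ker(\mu_2)$ is upper semicontinuous, it is enough to exhibit a single degenerate $(C_0,L_0)$ of arithmetic genus $r-3$ carrying a $g^r_{2r-3}$ for which $\mathrm{rank}(\mu)+\mathrm{rank}(q)\ge3(2r-3)$. I would take $C_0$ to be a rational curve acquiring $r-3$ nodes — the normalisation $\mathbb{P}^1\to C_0\subset\mathbb{P}^r$ given by a degree $2r-3$ map whose sections are required to match at $r-3$ general pairs of points — or equivalently $\mathbb{P}^1$ with $r-3$ general elliptic tails attached as in section $3$, the tails collapsing under $L_0$. Pulling $\mu$ and $q$ back to $\mathbb{P}^1$ turns them into explicit catalecticant (Hankel) maps subject to the $r-3$ node conditions, where both ranks can be estimated from the rational normal curve computation together with the genericity of the node data.

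The main obstacle is the rank bound on the second order map $q$ (equivalently on $\mu_2$). Unlike the ordinary multiplication $\mu$, this is a genuinely second order condition whose rank is \emph{not} pinned down by a Riemann--Roch/Euler characteristic count, since the relevant kernel bundle $\Ker(\Sym^2V\otimes\mathcal{O}_C\to\Sym^2\mathcal{P}^1(L'))$ has $\chi\to-\infty$. The real work is to control $q$ on the degenerate curve: tracking the extra tangent directions created at the nodes and the behaviour of $\omega_{C_0}^{2}\otimes L_0^{2}$ there, and verifying that a sufficiently general choice of node/tail data forces $\mathrm{rank}(q)\ge 3r-7$, so that no unexpected quadric through the tangent developable survives. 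Once this is checked on one good configuration, upper semicontinuity propagates the inequality to the general smooth $(C,L')$.
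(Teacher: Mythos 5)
Your cohomological setup is correct and clean: identifying the quadrics through $TC$ with $\Ker\bigl(\mu_2\colon \Sym^2V\to H^0(\Sym^2\cal{P}^1(L'))\bigr)$, observing that on $\Ker(\mu)$ the middle graded piece dies so that one gets $q\colon\Ker(\mu)\to H^0(K_C^2\otimes L'^2)$, and the arithmetic reduction (using $h^0(L'^2)=3r-2$, $h^0(K_C^2\otimes L'^2)=7r-18$, $\binom{r+2}{2}-\binom{r-4}{2}=3(2r-3)$) to the two rank statements $\mathrm{rank}(\mu)=3r-2$ and $\mathrm{rank}(q)\ge 3r-7$ are all right, as is the sanity check against the rational normal curve. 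But the proposal stops exactly where the proposition starts: the bound $\mathrm{rank}(q)\ge 3r-7$ is never established, and you say yourself that this is ``the real work.'' Unlike $\mathrm{rank}(\mu)$, this rank is not forced by any Euler-characteristic or Riemann--Roch count (as you note, the relevant kernel bundle has wildly negative $\chi$), so the entire content of the statement --- that no \emph{unexpected} quadric contains the tangent developable --- is deferred to an unexecuted computation on a degenerate curve. As written, this is a reduction of the problem to an equivalent open problem, not a proof.

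The degeneration you propose also adds avoidable difficulties. At a node there are two branches and two tangent lines, the sheaf of principal parts fails to be locally free, and the flat limit of the tangent developables $TC_t$ can pick up extra components (e.g.\ the planes spanned by the two nodal tangent lines), so the semicontinuity step needs care about \emph{which} scheme on the special fiber you intersect quadrics with; and the suggestion of elliptic tails ``collapsing under $L_0$'' is not compatible with the construction of section 3, where $\deg L_0|_E=1$. The paper sidesteps all of this by projecting the rational normal curve of degree $2r-3$ from a center $L=\mathrm{span}\{P_1,P_3,\dots,P_{2r-7}\}$, producing a \emph{unibranch} rational curve $C'$ of arithmetic genus $r-3$ (local ring $\bb{C}[t^2,t^{2r-5}]$, a smoothable plane-curve singularity carrying a smoothable nonspecial $g^r_{2r-3}$). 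Then quadrics through $TC'$ are exactly the quadrics through the tangent developable of the rational normal curve that are singular along $L$; since Eisenbud's explicit generators $\Gamma_{a,b}=\Delta_{a+2,b}-2\Delta_{a+1,b+1}+\Delta_{a,b+2}$ of that space are known, the count becomes finite linear algebra on the anti-diagonal blocks $S_k$, yielding exactly $\binom{r-4}{2}$, after which semicontinuity gives the statement for the general pair. If you want to salvage your route, the concrete missing step is to carry out the analogous explicit computation of $q$ (your Hankel/catalecticant maps with node conditions) on one specific degenerate curve and verify $\mathrm{rank}(q)\ge 3r-7$ there; until that is done there is no proof.
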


Consider the rational normal curve $C$ of degree $d$ in $\bb{P}^d$.
It is well known that there are 
$$d\choose2$$
independent quadrics containing $C$. Denote them $\Delta_{a,b}$ for $0\le a<b\le d-1$, where $\Delta_{a,b}$ is the $2\times2$ minor corresponding to columns $a$ and $b$ of the matrix
$$\left(\begin{matrix}x_0&x_1&x_2&...&...&x_{d-2}&x_{d-1}\\x_1&x_2&x_3&...&...&x_{d-1}&x_d\end{matrix}\right)$$
with the usual convention that $\Delta_{a,b}=-\Delta_{b,a}$.

It is proved in \cite{E} that there are 
$$d-2\choose2$$
quadrics containing $TC$. They are
$$\Gamma_{a,b}=\Delta_{a+2,b}-2\Delta_{a+1,b+1}+\Delta_{a,b+2}$$
for $0\le a,b\le d-3$.

Now consider the projection $C'$ of $C$ to $\bb{P}^r$ ($d=2r-3$, $r\ge 3$) given by: 
$$t\longrightarrow[1,t^2,t^4,...,t^{2r-6},t^{2r-5},t^{2r-4},t^{2r-3}].$$
$C'$ has arithmetic genus $r-3$ and has a unique singular point at $t=0$ locally isomorphic to $Spec(\bb{C}[t^2,t^{2r-5}])$.

\begin{lemma}\label{lin}The complete linear system $|\mathcal{O}_{C'}(1)|$ has projective dimension $r$, i.e. $C'\subset\bb{P}^r$ is linearly normal. As a consequence, $C'$ is smoothable in $\bb{P}^r$.
\end{lemma}
\begin{proof}Denote $L_k=span\{P_1,P_3,...,P_{2k-1}\}\subset\mathbb{P}^{2r-3}$, where $P_i=[0,0,...,1^{(ith)},...,0]$, $i=0,1,...,r$ and  $C_k\subset\mathbb{P}^{2r-3-k}$ the projection of $C$ with center $L_k$. The curve $C_k$ has a unique singular point locally isomorphic to $Spec(\bb{C}[t^2,t^{2k+1}])$. Note that $C'=C_{r-3}\subset\bb{P}^r$. We use induction to show that the complete linear system $\mathcal{O}_{\bb{P}^{2r-3-k}}(1)|_{C_k}$ has projective dimension $2r-3-k$. The natural projection map
$Pr_k:C_k\rightarrow C_{k+1}$ induces an inclusion $H^0(\cal{O}_{C_{k+1}}(1))\subset H^0(\cal{O}_{C_k}(1))$. By induction hypothesis, $h^0((\cal{O}_{C_k}(1))=h^0(\cal{O}_{\bb{P}^{2r-3-k}}(1))=2r-2-k$. Since we obtain $C_{k+1}$ from $C_k$ by projection from a point, $h^0((\cal{O}_{C_{k+1}}(1))\ge h^0((\cal{O}_{C_k}(1))-1$. Since $C_{k+1}$ has arithmetic genus one higher than $C_k$,  $H^0(\cal{O}_{C_{k+1}}(1))\subsetneqq H^0(\cal{O}_{C_k}(1))$. Thus $h^0((\cal{O}_{C_{k+1}}(1))=2r-3-k$. For the last statement, note that the curve $C'$ only has plane curve singularity, thus is smoothable (as an abstract curve). Moreover, since $h^0(\cal{O}_{C'}(1))=r+1$, $\cal{O}_{C'}(1)$ is a complete non-special $g^r_{2r-3}$. For any one parameter smoothing $(C_t,L_t)$ of the pair $(C',\cal{O}_{C'}(1))$, since $h^0$ of the central fiber does not jump up, all $r+1$ global sections of $\cal{O}_{C'}(1)$ deform to $L_t$.
\end{proof}

\begin{proof} of {\bf Proposition 8.1}
We could explicitly compute the quadrics containing $TC'$: they are just quadrics in $\bb{P}^{2r-3}$ containing $TC$ with singular locus containing the center of projection $L_{r-3}=span\{P_1,P_3,...,P_{2r-7}\}$. 
Now if we think of each quadric $\Gamma_{a,b}$ as a $(2r-2)\times(2r-2)$ symmetric matrix, we are just looking for matrices $Q\in\ S_{\Gamma}:=span\{\Gamma_{a,b}|\ 0\le a<b\le 2r-6\}$ such that
$L_{r-3}\subset\Ker{Q}$ (We think of $Q$ as an linear operator on $\bb{C}^{2r-2}$ and $L_{r-3}$ as a subspace of $\bb{C}^{2r-2}$).

Notice that each $\Gamma_{a,b}$, as a matrix, only has possibly non-zero entries at $(i,j)$-spot ($i$ and $j$ go from $0$ to $2r-3$) if
$$i+j=a+b+3.$$ Said differently, each $\Gamma_{a,b}$, as a matrix, is supported on one of the diagonals.

For each $4\le k\le 4r-10$, there are $(\lfloor\frac{k}{2}\rfloor-1)$ $\Gamma_{a,b}$'s contributing to nonzero entries on the line
$$i+j=k,\ \text{for } 4\le k\le 2r-3,$$
and $(2r-4-\lfloor\frac{k+1}{2}\rfloor)$ $\Gamma_{a,b}$'s if $2r-3<k\le4r-10$. 

Write
$$S_{\Gamma}=\oplus_{k=4}^{4r-10}S_k$$
where $S_k=span\{\Gamma_{a,b}|\ 0\le a<b\le2r-6,\ a+b=k-3\}$. It is obvious that if $Q\in S_{\Gamma}$ vanishes on $L_{r-3}$, then its $S_k$ component also vanishes on $L_{r-3}$. Thus it suffices to count how many quadrics in each $S_k$ vanishes on $L_{r-3}$.

Let's just consider the case  $4\le k\le 2r-3$, the other case is similar.

When $k$ is odd, vanishing on $L_{r-3}$ imposes $(\frac{k-1}{2})$ independent conditions on $S_k$, more than dimension of $S_k$. Thus no quadrics in $S_k$ vanishes on $L_{r-3}$.

When $k$ is even, vanishing on $L_{r-3}$ only imposes $\lceil\frac{k}{4}\rceil$ independent conditions. We conclude that for  $4\le k\le 2r-3$, there are 
$$\sum_{8\le k\le 2r-4,\ k\ \text{even}}(\frac{k}{2}-1-\lceil\frac{k}{4}\rceil)=\lfloor\frac{r^2-8r+16}{4}\rfloor$$
quadrics containing $TC'$ (if $r\le 5$ there are none!).

Similarly, for $2r-3<k\le4r-10$, we count that there are
$$\lfloor\frac{r^2-8r+16}{4}\rfloor-\lfloor\frac{r-4}{2}\rfloor$$
quadrics containing $TC'$.

So we get total of 
$$\lfloor\frac{r^2-8r+16}{4}\rfloor+\lfloor\frac{r^2-8r+16}{4}\rfloor-\lfloor\frac{r-4}{2}\rfloor={r-4\choose2}$$
quadrics containing $TC'$.
By specializing to $C'$, we conclude our proof.
\end{proof}


 



\end{document}